\documentclass{article}

\usepackage{fullpage}
\usepackage[latin1]{inputenc}
\usepackage[T1]{fontenc}
\newcommand{\citep}{\cite}
\newcommand{\citet}{\cite}
\usepackage{wrapfig}
\usepackage{amsmath,amsthm}
\usepackage{amssymb}
\usepackage{latexsym}
\usepackage{graphicx,epstopdf}
\usepackage{enumerate}
\graphicspath{{./images/}}
\usepackage[rflt]{floatflt}

\def\rbb{\mathbb{R}}

\def\ep{\epsilon}

\def\Jmat{\mathsf{\mathit{J}}}
\def\Fstall{F_*}

\newtheorem{theorem}{Theorem}[section]
\newtheorem{prop}[theorem]{Proposition}

\newtheorem{lemma}[theorem]{Lemma}

\newcounter{assume}
\newtheorem{assumption}[assume]{Assumption}

\newcommand{\E}[1]{\mathbb{E}\left[#1\right]}

\def\tt{\tilde t}
\def\Phip{\Phi^{\prime}}

\usepackage[table]{xcolor}
\usepackage{colortbl}
\usepackage{ctable}
\usepackage{tabularx}
\newcommand{\oh}{\frac{1}{2}}

\def\rbb{\mathbb{R}}
\newcommand{\Nmot}{N}

\newcommand{\difd}{\mathrm{d}}

\newcommand{\sigfun}{h}

\newcommand{\bZ}{Z}

\newcommand{\Fspring}{F}
\newcommand{\Fmag}{\bar{F}}
\newcommand{\slen}{r}
\newcommand{\lcon}{\ell_c}

\newcommand{\Ftrap}{\theta}
\newcommand{\Ftrnd}{\tilde{\theta}}

\newcommand{\kB}{k_{\mathrm{B}}}

\newcommand{\Xnd}{\tilde{X}}

\newcommand{\Znd}{\tilde{Z}}
\newcommand{\Xndd}{\check{X}^{(d)}}
\newcommand{\Yndd}{\check{Y}^{(d)}}

\newcommand{\Xna}{\check{X}}
\newcommand{\Yna}{\check{Y}}

\def\sigmc{\sigma_{\text{m/c}}}
\newcommand{\Xavg}{\bar{X}}
\newcommand{\tavg}{\bar{t}}
\newcommand{\gavg}{\bar{g}}

\newcommand{\gaussbig}[3]{\mathcal{A}[#3](#1,#2)}
\newcommand{\gaussbigbig}[3]{\mathcal{A}[#3]\Big(#1,#2\Big)}
\def\gtwoavg{G}

\newcommand{\Driftsys}{V_{\text{eff}}}

\newcommand{\tnd}{\tilde{t}}

\renewcommand{\vec}{\mathbf}
\newcommand{\M}{M}
\newcommand{\R}{R}
\newcommand{\rnf}{R}
\newcommand{\mnf}{M}
\newcommand{\xnf}{\hat X}
\newcommand{\znf}{\hat Z}
\newcommand{\tnf}{\bar t}
\newcommand{\dnf}[2]{\mathcal{\hat D}_{#1}(#2)}
\newcommand{\vnf}[2]{\mathcal{\hat V}_{#1}(#2)}
\newcommand{\df}[2]{\mathcal{D}_{#1}(#2)}
\newcommand{\vf}[2]{\mathcal{V}_{#1}(#2)}

\newcommand{\vforig}[2]{V_{#1}(#2)}

\newcommand{\xnfvec}{\mathbf{\hat X}}

\renewcommand{\P}{\mathbb{P}}

\newcommand{\gsym}{\eta}
\newcommand{\Phii}{\Phi_i}
\newcommand{\Phipi}{\Phi^{\prime}_i}
\newcommand{\nonld}{\lambda}
\newcommand{\normz}{K_Z}

\newcommand{\vmax}{v_{\mathrm{max}}}
\newcommand{\vmin}{v_{\mathrm{min}}}

\newcommand{\ddtavg}{\frac{\difd}{\difd \tavg}}



\begin{document}

\title{Asymptotic Analysis of Microtubule-Based Transport by Multiple Identical Molecular Motors}

\author{Scott A.~McKinley\thanks{Mathematics Department, University of Florida, Gainesville, FL, 32611}, Avanti Athreya\thanks{Department of Applied Mathematics and Statistics, Johns Hopkins University, Baltimore, MD 21218.}, 
John Fricks \thanks{Department of Statistics, Penn State University, University Park, PA 16802.} and Peter R.~Kramer\thanks{Department of Mathematical Sciences, Renssalaer Polytechnic Institute, Troy, NY 16802.}}

\maketitle









\begin{abstract}
	We describe a system of stochastic differential equations (SDEs) which model the interaction between processive molecular motors, such as kinesin and dynein, and the biomolecular cargo they tow as part of microtubule-based intracellular transport.  We show that the classical experimental environment fits within a parameter regime which is qualitatively distinct from conditions one expects to find in living cells.  Through an asymptotic analysis of our system of SDEs, we develop a means for applying \emph{in vitro} observations of the nonlinear response by motors to forces induced on the attached cargo to make analytical predictions for two parameter regimes that have thus far eluded direct experimental observation: 1) highly viscous \emph{in vivo} transport and 2) dynamics when multiple identical motors are attached to the cargo and microtubule.
\end{abstract}



\section{Introduction}

Critical to the proper functioning of a biological cell is the efficient internal transport of organelles and other intracellular cargo that collectively form the basis of the cellular infrastructure \cite{Howard:2001}. In eukaryotic cells, which are characteristic of all complex organisms, these biological materials are generally assembled near the nucleus and are then packaged into membrane-bound vesicles to be distributed through the cytoplasm to appropriate locations throughout the cell.  For larger vesicles and organelles (on the order of 1 $\mu m$), diffusion is too slow for efficient transport, and inadequate for proper spatial distribution of important organelles and molecules throughout the cell.  Cells have therefore evolved an intricate transport apparatus consisting of a cytoskeletal network of thin filaments called \emph{microtubules} and proteins called \emph{processive molecular motors} that move in a directed fashion along the microtubules while generating sufficient force to tow vesicles, which we will generally call \emph{cargo}, at a considerable rate (as high as 1 \emph{$\mu$m}/\emph{s} \cite{Hancock:2010,Visscher:1999,hirokawa1998kinesin}). 

The class of motor proteins we consider in this work are \emph{kinesins}, which are typically responsible for transport toward the cell periphery.
Two decades of biochemical and biophysical investigations have lead to a working model for how motor proteins move along microtubules \cite{Muthukrishnan:2009p353}: Kinesins are dimeric molecules consisting of two heads that attach to a microtubule, a coiled-coil tether that joins the two chains, and a cargo binding tail (Figure \ref{fig:transport}) \cite{hancock2003kinesin}.  Each head contains both an ATP and a microtubule binding site, and these motors ``walk'' along the microtubule track by chemical coordination of ATP hydrolysis cycles such that the cycles remain out of phase and at least one head remains bound to the microtubule.
We say the motors are ``processive'' because they take many steps during each encounter with a microtubule. 
All told, we have substantial information about the operation and properties of individual molecular motors bound to cargo through work done by biologists~\citep{Banting:2000,Kolomeisky:2007,Howard:2001,howard:1989}, physicists~\citep{Astumian:2002,Reimann:2002,Hanggi:2009,Maes:2003}, and mathematicians~\citep{Mogilner:2002,Wang:2008,Peskin:1995,DeVille:2008,Qian:2000,Kinderlehrer:2002}.  
\begin{wrapfigure}{r}{0.35\textwidth}
\includegraphics[scale=.6]{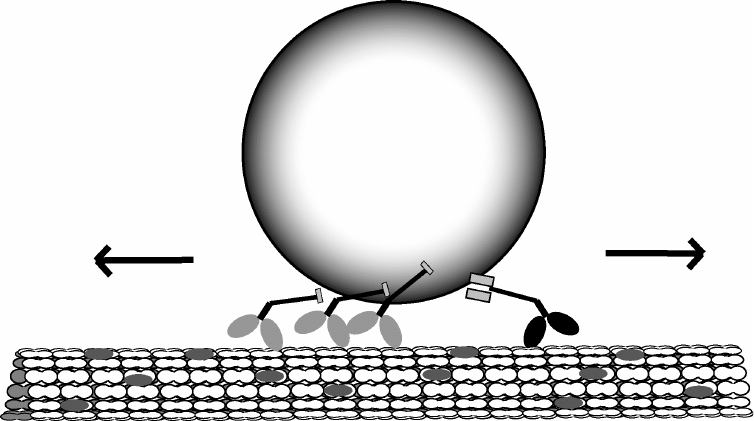} \centering
\caption{A cartoon model of intracellular transport. Intracellular cargo may be attached to microtubules by one or more molecular motors which individually tend to walk toward either the plus-end or minus-end of a microtubule.}
\label{fig:transport}
\end{wrapfigure}

However, translating these experimental observations and the associated theory for single motor and cargo systems into a predictive model for \emph{in vivo} intracellular transport has been constrained by several practical limitations.  First, the two primary experimental techniques, described in Section \ref{sec:discussion}, are conducted in fluid environments that have substantially lower viscosity than what is expected in the cytoplasm \cite{Mitchell:2009}. Second, it is expected that the \emph{in vivo} transport of a given cargo will involve multiple motors, possibly of different types, that attach and detach from microtubules dynamically.  Indeed, in their pioneering Markov chain model ~\cite{Muller:2008pnas,Muller:2008jstatphys,Muller:2010}, M\"uller, Klumpp and Lipowksy indicate the remarkably complex behavior that can result from a cargo responding to molecular motors arranged in a ``tug-of-war" configuration.  However, with one notable exception where motors were artificially bound to the cargo in pairs \cite{jamison:2010}, it is experimentally difficult to infer precisely how many motors of each type are bound to both the cargo and a microtubule at the same time.  
In recent years, therefore, theoretical work and numerical simulations of multiple-motor-cargo systems have both played a strong role in generating predictions that have been later verified, or modified as appropriate, by experimental findings~\citep{Klumpp:2005,Muller:2008pnas,Muller:2008jstatphys,Guerin:2010,Wang:2009,Campas:2006,Posta:2009,Korn:2009,Muller:2010,Zhang:2010,jamison:2010,kunwar2008stepping,Hendricks:2010,Rogers:2009,Larson:2009,Gross:2007,Vershinin:2007}. 

\subsection{Outline of the Paper}
Our goal is to contribute to these efforts by constructing a stochastic dynamical system that is both analytically tractable and rich enough to model the diverse conditions relevant for microtubule-based intracellular transport.  Our system of stochastic differential equations (SDEs) very naturally incorporates the following qualitative features of motor dynamics: a linear or nonlinear spring model for the motor-cargo linkage, thermal fluctuations that influence the cargo, a nonlinear response by the motor to cargo forces, nuanced changes in behavior due to motor configuration, and fluctuations in the motor position arising from the stochastic timing of the various mechanical and chemical events that occur during motor stepping. These features were absent in the original MKL models~\cite{Muller:2008pnas,Muller:2008jstatphys}, but are now common, not always in combination, in recent simulation-based studies \cite{Bouzat:2010,Wang:2009,Korn:2009,Kunwar:2010}. We find that they substantially affect the predictions of the model.  

The framework we develop is quite general, but we restrict the model considerably in order to perform specific rigorous analysis. In particular, we study the transport of thermally fluctuating cargo by identical cooperative motors in the absence of motor binding and unbinding.  This leads to certain results that are in contrast to prior studies, but when this happens, our analysis suggests that such properties may be a consequence of attachment/detachment dynamics which warrant separate study (see for example, \cite{Muller:2008pnas}). 
Our purpose in neglecting binding and unbinding dynamics in the present study is not to dispute their relevance, but to provide a point of reference for understanding how the various properties of the motors and cargo affect their collective behavior.

The core of our approach lies in Section \ref{sec:nondim}, where we perform a dimensional analysis of the system of SDEs with physically relevant parameters.  
We find that there is a crossover in qualitative behavior between the diffusion-dominated, low viscosity \emph{in vitro} environment in which many experiments are conducted, and the high viscosity environment \emph{in vivo}.  This result follows from the identification of critical nondimensional parameter groups summarized in Table \ref{table:nondim}.  In the low viscosity regime, the rapidly fluctuating cargo achieves a quasi-stationary distribution with respect to the slower moving motors; in Section \ref{sec:nondim} we introduce the proper stochastic averaging techniques to calculate the influence of the cargo on the motors, see \eqref{eq:motavg}.  In Section \ref{sec:low-viscosity} we demonstrate that this separation of time scales results in a motor-cargo system velocity that is insensitive to order-of-magnitude changes in the viscosity of the fluid environment. Indeed, this has been observed \emph{in vitro} by Gross et al \cite{Gross:2007} and \emph{in silico} by Kunwar and Mogilner \cite{Kunwar:2010}.  This framework also allows us to easily predict the result of perturbing various other parameters.  For example, with the development of synthetically modified motors \cite{Rogers:2009,yildiz2008intramolecular,Shastry:2010} it may soon be possible to tune the spring constant of the tail of the motor which binds to the cargo.  From equation \eqref{eq:velocity-one-motor} we observe that as the spring constant increases, the motor-cargo system velocity will decrease.

Recently there has been some discussion about how to translate inferences made from low viscosity \emph{in vitro} experiments into predictions more like the conditions of a biological cell \cite{Gross:2007,Shubeita:2008,Mitchell:2009}. The cytoplasm is highly viscous, and likely to have some viscoelastic properties. It is beyond the scope of this paper to address diffusion in a viscoelastic environment, so we adopt the same approximation for cytoplasmic viscosity as presented by Mitchell \& Lee \cite{Mitchell:2009}. In Section \ref{sec:onefluc} we attempt to establish an equivalence between the drag force felt by the motor via the cargo at high viscosity with forces artificially introduced by experimenters in a low viscosity regime.  This leads to a general formula \eqref{eq:high-viscosity-velocity} for calculating the velocity of a single motor with cargo as a function of solvent viscosity, and we provide a ``back-of-the-envelope'' formulation in equation \eqref{eq:velocity-approx}.  While these formulas provides a convenient interpolation between \emph{in vitro} and \emph{in vivo} velocities, our model does predict that the force required to stall a motor increases as the viscosity of the environment increases.  This counterintuitive result is not in agreement with the experimental work of Shubeita et al \cite{Shubeita:2008}.  Resolving this disagreement merits a more detailed analysis of the near-stalled motor mechanics, which we hope to address in future work. 

Having established these results for a single motor interacting with cargo, we proceed in Section \ref{sec:nocargofluc} to analyze the dynamics when two identical motors are attached to the cargo. There are two primary mechanisms by which one may expect multiple motors to enhance transport.  First, cargo bound to microtubules by multiple motors are able to stay attached, and hence stay in transport mode, longer.  Second, because the cargo load is distributed among the motors, each individual motor carries less load and should be able to process along the microtubule more quickly.  However, it has recently been observed that under light cargo loads, transport with multiple motors may be {\it slower} than with one \cite{kunwar2008stepping,Wang:2009,jamison:2010}.

We consider the latter phenomenon by way of two averaged models.  In Section~\ref{sec:nocargofluc}, we study an ``instant relaxation'' model where the cargo immediately moves to a position centered among the motors, a simplifying assumption also made in~\citet{Wang:2009}.  This is essentially equivalent to assuming the time scale of the cargo dynamics is fast compared to the motors (which we show in Section \ref{sec:nondim} is typically true \emph{in vitro}, but not necessarily \emph{in vivo}), 
and that thermal fluctuations of the cargo can be neglected (this latter assumption does not have a similar quantitative justification).  We compute transport properties of the motor-cargo system as a function of external force. In this simplified setting, we show that whether or not two motors are slower than one, in fact,  depends on the shape of the force-velocity curve through its influence on the spatial configurational dynamics of the motors.  This result is formalized in Theorem \ref{thm:qual-nf}. We also demonstrate that the force required to stall a two-motor system is more than twice that of a one-motor system.  This \emph{superadditivity} of stall force is in contrast to prior studies, suggesting that previously observed \emph{subadditivity} \cite{Kunwar:2010} may again be a consequence of attachment/detachment dynamics.

\section{The mathematical model}
\label{sec:discussion}

\subsection{Experimental Considerations}
\label{sec:experimental-considerations}

There are two primary experimental techniques for studying how teams of motors respond to a given load.  In microtubule gliding assays \cite{gagliano:2010}, the tails of kinesin, which are typically bound to cargo, are instead adhered to a glass plate. A microtubule is placed on the motor-coated plate; by binding to and taking steps on the microtubule, the kinesin motors generate sufficient force to transport the microtubule, whose average velocity is observed.  Optical tweezers \cite{Perkins:2009} are used to apply an opposing force to the trailing end of the microtubule while its resulting velocity is monitored. In a second type of experiment, optical tweezers are used to track latex beads, serving as \emph{in vitro} cargo, which bind to an unspecified number of motors and are transported along individual microtubules.  Through an active feedback system, the tweezers are capable of applying approximately constant force while tracking the beads.  We will  focus primarily on this second experimental setup.   

In nearly all of the previous theoretical efforts, experimental data for single motor-cargo systems has been used to build models for the behavior of motor-cargo systems with multiple motors.  In our model as well, experimental data for single-motor cargo systems plays a vital role. We briefly summarize the most important qualitative properties of single motor and cargo systems: The motors take discrete steps along the microtubule -- for kinesin-1, approximately 8 nm \cite{Coppin:1996}. The stepping rate depends on the local ATP concentration \cite{Visscher:1999} and on external forces applied to the cargo \cite{Visscher:1999,Carter:2005}. The coiled-coil tail between the motor heads and the cargo is a semi-flexible elastic structure \cite{Coppin:1996}. Eventually,  the motors will unbind from the microtubules, the cargo, or both. The stronger the opposing force applied to the cargo, the shorter the \emph{run length} before such detachment occurs \cite{Schnitzer:2000}.

In terms of the motion of motors along the microtubule, many existing models employ discrete state-space Markov chain methods ~\cite{Wang:2009,Campas:2006,Posta:2009,Goldman:2010,Driver:2010}. 
For example, in \cite{Wang:2009}, motors undergo random walks on a spatial lattice, with hopping rates biased according to the strain in their tails connecting to the cargo; the cargo, in turn, is assumed to instantaneously relax to a minimal energy position conditioned on the current position of the motors.   A lattice is well-suited for the discrete-state stepping process of the motors and lends itself to incorporating both the biochemical transitions and the continuous spatial dynamics of the unbound head and cargo. However,  the continuum model we choose is a natural coarse-graining obtained through systematic asymptotic reduction procedures~\cite{Wang:2007,Wang:2003,Fricks:2006,Lindner:2001,Reimann:2002,Frickshancock2,Frickshancock3,Pavliotis:2005}.   In particular, starting from a more detailed dynamical description for the molecular motor,
formulas for the drift and diffusion coefficients appearing in Eq.~(\ref{eq:motorcargosde}) can be computed through Markov chain calculations~\cite{Wang:2007,Wang:2003,Fricks:2006}, renewal theory~\cite{Lindner:2001,Reimann:2002,Frickshancock2,Frickshancock3}, or homogenization theory~\cite{Pavliotis:2005}.  

Rather than exploring the discrete-continuous connection in detail here, however, we build our model at the mesoscale, based on single-motor force-velocity and force-diffusivity relations. The qualitative features of these relations are informed by the robust literature of experimental measurements ~\citep{Visscher:1999,Kojima:1997,Shtridelman:2008,Shtridelman:2009}. Furthermore, while the models of~\cite{kunwar2008stepping,Korn:2009,jamison:2010,Kunwar:2010,Wang:2009} do have spatial detail and incorporate force-velocity relationships comparable to our model, our formulation permits a more flexible and faithful rendering of the random diffusive component of the motor dynamics. 
We find that this diffusive component, along with the shape of the force-velocity curve, plays a crucial role in Proposition \ref{thm:superadd-nf}, where we show that the model exhibits a superadditive growth of the stall force for a collection of motors.
Our choice of modeling framework renders asymptotic analysis possible; we identify which parameters are physically small and use stochastic averaging techniques  
to derive a simplified, lower-dimensional system of equations whose solution approximates the dynamics of the original model.  In particular, for the case of $ N=2 $ motors, we use this systematic dimension reduction to obtain a completely analytical relationship between the dynamics of the motor-cargo complex and the model for a single motor.

\subsection{An SDE modeling framework}

The location of a molecular motor along a microtubule as a function of time $ t $ is described by a one-dimensional spatial coordinate $ X_i(t) $, indexed by the motor  $ i  \in \{1,\ldots,N\} $.  In particular, we coarse-grain over details of the biochemical and conformational states of the motor, and we consider length scales (on the order of hundreds of nanometers) sufficiently large that the motor position can be reasonably described by a continuous spatial variable rather than in terms of discrete steps of a few nanometers.   Because we consider the case of $ N $ cooperative and identical motors, each motor has identical dynamics.   We represent the position of the cargo by a one-dimensional spatial coordinate $ Z(t) $ along the microtubule, neglecting transverse fluctuations.  Korn el al \cite{Korn:2009} numerically explore a similar spatial continuum model that allows a three-dimensional representation for the cargo, but since our purpose here is to apply stochastic averaging techniques to derive analytical coarse-grained descriptions, we choose to illustrate this process in the simplest case. 

In this model, the dynamics of each motor and the cargo to which it is bound are described by the following system of stochastic differential equations \begin{align}
	\difd X_i(t) &=  v g(\Fspring (X_i(t) - Z(t))/F_{*}) \, \difd t + \sigma \sigfun ( F (X_i(t)-\bZ(t))/F_{*}) \, \difd W_i(t)
	\nonumber \\
	\gamma \difd Z(t) &= \Big[\sum_{i=1}^{\Nmot} \Fspring (X_i (t),Z (t)) - \Ftrap\Big] \, \difd t + \sqrt{2 k_B T \gamma} \, \difd W_z(t),
	\label{eq:motorcargosde}
\end{align}
where  $ F_{*} $ is the stall force of a motor; $ v $ is the mean speed at which a motor moves along the track when no force is being applied; $ \frac{1}{2} \sigma^2 $ is the effective diffusivity of a motor along the microtubule when no force is applied; $ \Ftrap $ is the magnitude of any laser trap force applied to the cargo; $ T $ is the absolute temperature of the system; and $ \kB $ is Boltzmann's constant. 

For the friction constant of the cargo $\gamma$, we use the Stokes-Einstein drag law for spherical particles, $\gamma = 6 \pi a \eta$, where $a$ is the effective radius of the cargo and $\eta$ is the dynamic viscosity of the fluid environment. There are in fact several appropriate choices for $\gamma$ when one considers the various types and shapes of cargo found \emph{in vivo}, see Mitchell \& Lee \cite{Mitchell:2009} for a thorough discussion.  Our default simulations will be based on spherical 500 nm particles in a solvent with the viscosity of water at 300 K.  This is appropriate for describing a typical \emph{in vitro} experimental setup.   The cytoplasmic environment, however, is thought to have a viscosity as much as 600
 times that of water \cite{Mitchell:2009}, in Sections \ref{sec:onefluc} and \ref{sec:multimotor} we will study our model across a broad range of values of viscosity.  Rather than explicitly representing the results separately in terms of cargo size $ a $ and viscosity $ \eta $, which then would require a constant appeal to a spherical cargo assumption, we will present all results in terms of the more broadly applicable lumped friction constant $ \gamma $.

The nondimensional functions $g$ and $h$ are rescaled force-dependent drift and diffusion functions for the motors.  The drift determines the instantaneous expected velocity of the motor and the diffusion function determines the local diffusivity of the process.  Here we will simply take the diffusion as independent of the applied force ($ h \equiv 1 $), and 
discuss the drift function in greater detail in Section \ref{sec:g-and-h}.  Typical values for the dimensional constants appearing in (\ref{eq:motorcargosde}) are as listed in Table~\ref{tab:const} for kinesin-1~\cite{Muthukrishnan:2009p353,Kojima:1997,Visscher:1999}.

The functions $ \{W_i (t)\}_{i=1}^{\Nmot}, W_z (t) $ are independent standard Brownian motions: that is, each of $\{W_i(t)\}_{i=1}^{\Nmot}, W_z(t) $ is a continuous, Gaussian stochastic process with independent increments, zero mean, and variance equal to $t$; and $\{W_i(t)\}_{i=1}^{\Nmot}, W_z(t) $ are independent. There is a noteworthy distinction between modeling justification of the process $W_z (t)$ and the system of processes $\{W_i (t)\}_{i=1}^{\Nmot}$.  The SDE for $Z$ is the overdamped Langevin equation for a diffusing spherical particle, subject to external forces that appear in the drift term.  Were it not for these forces, $W_z(t)$ would represent the Brownian motion of a free particle.  By contrast, the processes $\{W_i (t)\}_{i=1}^{\Nmot}$ represent the deviations from the mean of the motor dynamics that arise from fluctuations in chemical reaction wait times (and other discrete random events) associated with stepping. For this reason, the SDE for the cargo should satisfy the fluctuation-dissipation relationship via the drag parameter $\gamma$, while the motor equations need not. 

The function $ F (\slen) $  describes the force exerted by the cargo on a motor through its connecting tail, in the direction against its natural forward motion, where $\slen$ is the signed separation between the motor and cargo positions.  As in the literature, we use spring-like models for this force law.  The simplest Hookean model 
\begin{equation}
\Fspring (\slen) = \kappa \slen \label{eq:lintail}
\end{equation}
with constant spring coefficient $ \kappa $, is actually fairly consistent with experimental data for kinesin~\cite{Coppin:1996,Kojima:1997}, provided the tail is not stretched too far ( $ \slen \lesssim 70 $  nm)~\cite{Hariharan:2009}, and we use it for clarity in our main development.   We stress that we consider the linear force law (\ref{eq:lintail}) merely as a convenient and reasonable phenomenological approximation; the connecting tail has an effectively jointed structure which gives rise to a more complex relationship between the force and the motor-cargo separation.
Our analysis can be readily extended to nonlinear spring models that have nonzero rest length \cite{kunwar2008stepping}, a sigmoid function as in \cite{jamison:2010} and the wormlike chain model~\cite{Howard:2001}, provided the force law is approximately linear over a wide enough range (\ref{sec:nonlinspring}).  %

\begin{table}
\begin{small}
\rowcolors{2}{lightgray}{}
\centering \begin{tabular}{r|c|l}
	\toprule
	\multicolumn{3}{c}{\textbf{Motor Properties}} \\
	\midrule[0.08em] Quantity & Label & Value for kinesin \emph{in vitro} \\ 
	\midrule[0.08em] Step size & $L$ & 8 nm\\ 
	Stall force & $F_*$ & 7 pN\\
	Unperturbed motor velocity  & $v$ & 500 nm/s \\
	Neck linkage spring constant 
	& $\kappa$ & 0.34 pN/nm
\\
	``Randomness Parameter'' & $r = \sigma^2 / L v$ & 1.2 \\
	Effective motor diffusion  & $\sigma^2$ & 5000 $\text{nm}^2/\text{s}$ \\ 
	\midrule[0.08em]\multicolumn{3}{c}{\textbf{Cargo and Environment Properties}} \\
	\midrule[0.08em] 
	Effective Cargo Radius & $a$ & 500 nm \\ 
	Fluid Viscosity & $\eta$ & $1 \times 10^{-9}$ $\text{pN} \cdot \text{s}/\text{nm}^2$ \\ 
	Optical trapping force & $\theta$ & $-10 $ to $ 20$ pN \\ 
	Boltzmann constant $\cdot$ temp  & $k_B T$ & 4.1 $\text{pN} \cdot \text{nm}$\\ 
	Cargo friction & $\gamma = 6 \pi a \eta$  & $1 \times 10^{-5}$ $\text{pN} \cdot \text{s} / \text{nm}$ \\  \bottomrule 
\end{tabular}
	\label{tab:const}
	\caption{Experimental estimates of physical parameters for kinesin \cite{Visscher:1999,shastry2010neck,Muthukrishnan:2009p353,Kojima:1997}. Our estimate for the cargo friction is based on the viscosity of water. In Section \ref{sec:onefluc} we assume that the \emph{in vivo} viscosity can be as high as $6 \times 10^{-7}$ $\text{pN} \cdot \text{s}/\text{nm}^2$, consistent with \cite{Mitchell:2009}.  
}
\end{small}
\end{table}

\subsection{Force-dependent instantaneous velocity} \label{sec:g-and-h}
We describe next the nondimensional drift functions $g$, which summarizes the detailed response of the motor's transport properties to an applied force.   The function $ g $ can be thought of as a nondimensionalized version of the force-velocity curves typically reported in experimental measurements of motors~\citep{Visscher:1999,Kojima:1997,Shtridelman:2008,Shtridelman:2009}, with the unforced motor velocity and stall force scaled so that $ g(0) = 1 $ and $ g(1) = 0 $.  Note that the argument $ f$ of $ g(f) $ is the ratio of the applied force, measured in the direction opposing natural motion of the motor, to the stall force.
In general, these functions $g$  decrease at an order unity rate for substall forces $ 0 \leq f \leq 1 $, and they saturate at finite constants both for superstall forces ($ f \geq 1 $) and forces applied along the direction of the motor's natural motion ($ f < 0 $).   Theoretical models~\citep{Muller:2008jstatphys,Korn:2009}  generally employ simple functions satisfying these conditions, though some models actually incorporate, for tractability rather than accuracy, functions $ g $ that grow linearly for large applied forces $f$ \cite{Muller:2008jstatphys}.   As a specific example that we employ in numerical simulations, we take a sigmoid function form for $g$, namely
\begin{equation}\label{E:gbar-form}
	g(f) = A - B \tanh (C f - D).
\end{equation}
We impose the constraints $g(0) = 1$ and $g(1) = 0$, as well as the asymptotic relations:
\begin{align*}
	\lim\limits_{f \rightarrow -\infty} g (f) & =\frac{v_{\max}}{v}, \qquad \lim\limits_{f \rightarrow \infty} g(f) =\frac{v_{\min}}{v},
\end{align*}
with $ v_{\max} $ the maximum speed the motor moves when pulled by a strong assisting force; $ v_{\min} $ the nonpositive velocity at which the motor moves backwards when dragged by a strong superstall force \cite{Carter:2005}; and $v$ the unperturbed motor velocity.
This uniquely determines the constants to be
\begin{equation*}
	A = \frac{v_{\max} + v_{\min}}{2v}, \, B = \frac{v_{\max} - v_{\min}}{2v}, \,  D = \text{arctanh}\left( \frac{2v - v_{\max} - v_{\min}}{v_{\max} - v_{\min}} \right)
\end{equation*}
\begin{equation*}
	\text{and } C = \text{arctanh}\left( \frac{v_{\max} + v_{\min}}{v_{\max} - v_{\min}}\right) - \text{arctanh}\left( \frac{v_{\max} + v_{\min}-2v}{v_{\max} - v_{\min}}\right).
\end{equation*}
Such a function with physically relevant parameters is depicted graphically in Figure~\ref{fig:gex}. 
\begin{figure}[ht]
	\centerline{\includegraphics[scale=0.3]{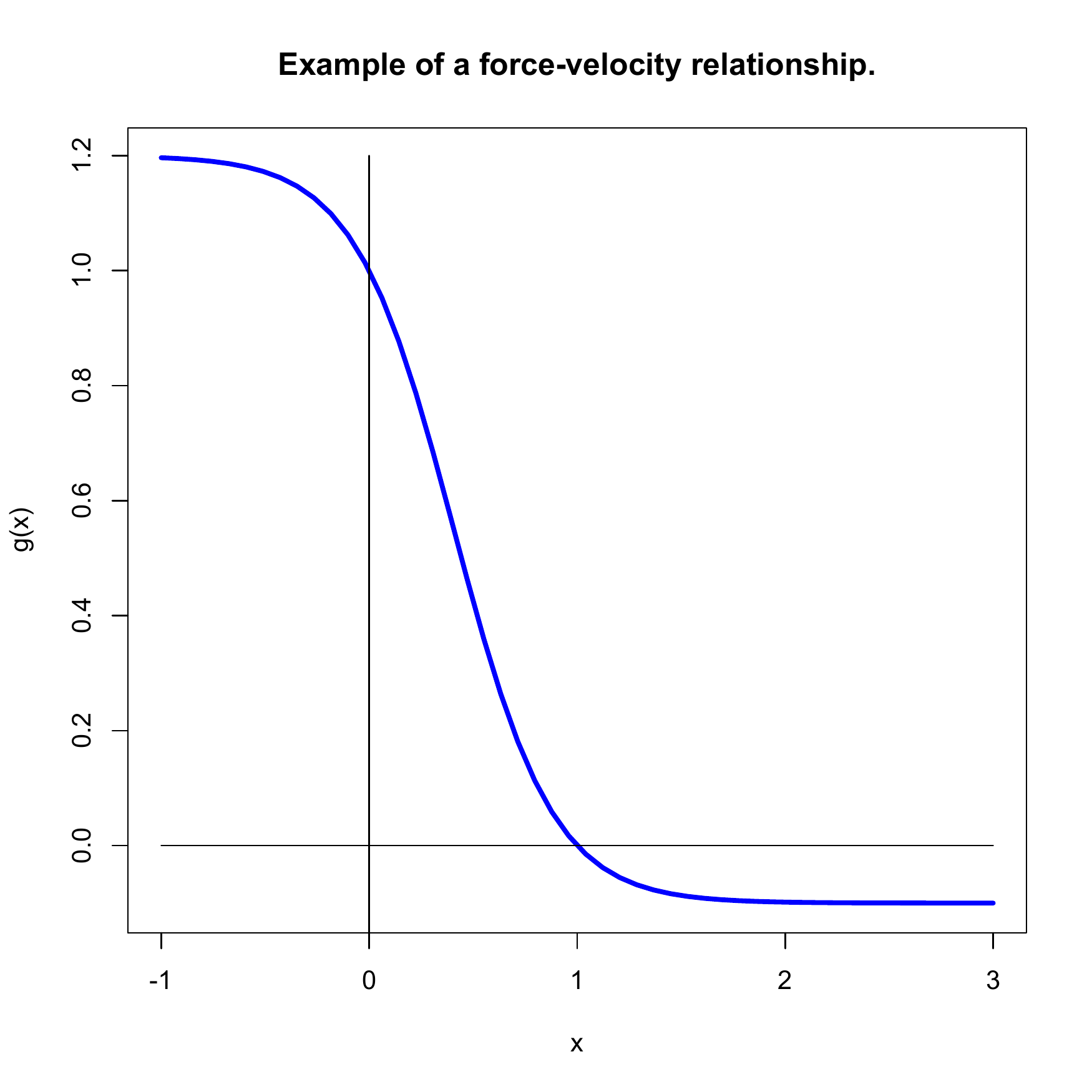}}
	\caption{An example model \eqref{E:gbar-form} for the force-velocity relation $ v g(f/F_*) $ with maximum velocity $v_{\text{max}} = 600$ nm/s, minimum velocity $v_{\text{min}} = -50$ nm/s, free velocity $v  = 500$ nm/s and stall force $ F_* = 7 pN$.   
\label{fig:gex}
}
\end{figure}
Although we have described a specific model for the force-velocity relationship, our methodology and conclusions apply to more general choices of models. As we assert in Theorems \ref{thm:qual-nf} and \ref{thm:qual}, the concavity of $g$ near zero force and near stall force is critical to qualitative behavior of a motor-cargo system when more than one motor is attached to the same cargo. (This is also noted by Wang and Li \cite{Wang:2009} and Kunwar and Mogilner \cite{Kunwar:2010}).  Our operating assumption is that the force-velocity curve is concave down for small external forces, and concave up for near-stall forces.  This corresponds to typical force-velocity curves for molecular motors under normal operating conditions~\cite{Visscher:1999,Carter:2005}, but we note that the force-velocity data from these sources also indicate a concave-up behavior for small external forces when the local ATP concentration is very low. (See \cite{Gross:2007,Mitchell:2009} for further discussion and contrasting properties for dynein). We summarize these qualitative properties, typical of the high ATP regime, as follows.

\begin{assumption} \label{a:g-qual}
	We assume that the instantaneous force-velocity relationship $g: \mathbb{R} \to \mathbb{R}$ is smooth, and bounded \cite{Carter:2005} with bounded derivatives.  We normalize $g$ with respect to the unencumbered velocity of the motor and the stall force such that
\begin{equation*}
	g(0) = 1, \text{ and } g(1) = 0.
\end{equation*}	
Furthermore we require $g$ to satisfy the following qualitative properties:
	\begin{enumerate}[(i)]
		\item (Monotonicity) $g(f)$ is strictly decreasing in $f$ \cite{Visscher:1999};
		\item (Concavity) There exists an $f_* \in (0, 1/2)$ such
		\begin{align*}
			g''(f) &< 0, \, \text{for all } f \in (-\infty, f_*]; \text{ and}\\
			g''(f) &> 0, \, \text{for all } f \in [1 - f_*, \infty);
		\end{align*}
		\item (Strong Concavity) \label{a:g-strong} 
		The function $\gsym(f) := g(f) + g(-f)$ is decreasing in $f$ for $f > 0$, while $\tilde \gsym(f) := g(1 + f) + g(1 - f)$ is increasing in positive $f$.
	\end{enumerate}
\end{assumption}

Note that functions of the form \eqref{E:gbar-form} satisfy Assumption \ref{a:g-qual}. Indeed, $g'(f) = - BC / \cosh^2(Cf - D)$ and so when $ |\vmin| < |\vmax| $ and $ v > (\vmax + \vmin)/2 $ (so that $A, B, C, D > 0$),
	\begin{align*}
		\frac{d}{df} \big(g(f) + g(-f)\big) =  -BC \left(\frac{1}{\cosh^2(Cf - D)} - \frac{1}{\cosh^2(-Cf - D)}\right) < 0
	\end{align*}
implying part the strong concavity property about 0. A similar calculation confirms the property with respect to $f = 1$.

We note though that the force-velocity relationship in the data collected by Carter and Cross \cite{Carter:2005} is not monotone for negative values of $f$. In fact, it is difficult to detect the precise properties in this regime because the motor is so apt to unbind from the microtubule. We choose to take from the data that the velocity of the motor is bounded, which is consistent with the observation that the stepping must be rate-limited by the binding rate of the forward head of the motor.

\section{Nondimensionalization and Identification of Distinguished Limits for Asymptotics}
\label{sec:nondim}

We begin with a consideration of the low viscosity environment characteristic of \emph{in vitro} experiments, for which the physical parameters in our model take values approximated by those presented in Table~\ref{tab:const}.
The equations (\ref{eq:motorcargosde}) can be readily simulated, but we exploit asymptotic techniques to reduce the coupled dynamical system to a more easily interpretable lower-dimensional system. 
To perform a nondimensionalization, we first choose suitable reference length and time scales to render the system (\ref{eq:motorcargosde}) nondimensional and to identify small parameters.  For this approach to produce physically meaningful small nondimensional parameters, the reference units must be chosen so that the nondimensional variables vary on order unity scales~\cite{Lin:1988,Novozhilov:1997}.   For a cooperative system of motors, the length and time scales can be chosen in the same way as for a single motor attached to the cargo, provided $ 1/\Nmot $ is not small compared to the resulting small nondimensional parameter.   This latter consideration is well-satisfied, since $ \Nmot \lesssim 10 $ and the small parameter that will emerge from the analysis has magnitude $ \epsilon \sim 10^{-3} $.   We remark that in other configurations, care is needed. In particular, in a tug-of-war configuration, the motors are all nearly stalled, and this will induce different length and time scales to their dynamics as compared to the fully cooperative configuration.   Moreover, while our nondimensionalization is still valid for the higher viscosities characteristic of \emph{in vivo} cellular environments, our conclusions about the relative sizes of the nondimensional parameters (particularly $ \epsilon $) will need to be revisited for this setting.

We consider  the relative size of the forces acting on the motor-cargo system, using the physical magnitudes of the various parameters as listed in Table~\ref{tab:const}, characteristic of an \emph{in vitro} experiment.
The frictional force $ \gamma v $ that would be exerted on the cargo if it were pulled at the full unencumbered speed $v $ of a motor is $ 2.5 \times 10^{-3} $ pN, by far the smallest force incorporated in the model.    
To characterize the strength of thermal effects, we suppose that the tail connecting the motor and cargo is in approximate thermal equilibrium, which implies (under the linear spring model) that its distortions impart a typical potential energy $ \oh \kB T $, so that 
$ \oh \kappa \E{(X (t)-Z(t))^2} \sim \oh \kB T $.
Consequently, the length of the spring has mean-square value 
$ \E{X(t) - Z(t))^2} \sim \kB T/\kappa, $
so the typical magnitude of force fluctuations induced for the connecting tail  is 
$$ \kappa \E{(X(t) - Z(t))^2}^{1/2} \sim \sqrt{\kB T \kappa} \sim 1.3 \text{pN}. $$    The stall force required to arrest the motor is, depending on the actual motor, in the range of 5 to 10 pN.  In experiments, the laser traps are generally used to apply forces that range from $-10$ to 20 pN.

These suggest the following leading order picture:   In the absence of a laser trap, the forces induced by thermal fluctuations are substantially stronger than those induced by friction, so the dynamics of the tail connecting the motor and cargo should be dominated by thermal fluctuations.  We therefore infer that a suitable reference length scale is $ \sqrt{\kB T/\kappa} \sim 3 \text{ nm}$, the typical length of the fluctuations in the tail due to thermal effects.   The connecting tail itself, in a realistic model, is actually on the order of $70$ nm, but as we discuss in \ref{sec:nonlinspring}, this length scale does not play any dynamical role, so we do not use it as a basis for our dimensional analysis.   Also, the $8$ nm step size of the molecular motor may appear relevant, but we are considering the effective behavior of the motor-cargo complex as it progresses over much larger distances; in our model this discrete-stepping detail has already been coarse-grained, with the effects incorporated into the force-velocity and force-diffusivity relations from Subsection~\ref{sec:g-and-h}.

To estimate the time scale, we note that neither the thermal forces nor the friction force are particularly strong compared to the stall force, so we crudely approximate the force-velocity relationship by its local linearization about zero force:
\begin{equation*}
g(f) \approx 1 + g^{\prime} (0) f,
\end{equation*}
where we have noted that $ g(0) = 1$.  Under this rough approximation, the dynamics of a cargo with a single motor can be written as the linear system:
\begin{equation}
\difd \left[\begin{matrix} X (t) \cr Z(t)\end{matrix}\right] = \Jmat \left[\begin{matrix} X(t) \cr Z(t)  \end{matrix}
\right] \, \difd t +
\left[\begin{matrix} v \cr -\Ftrap/\gamma \end{matrix}\right] \, \difd t +  \left[\begin{matrix} \sigma & 0 \cr 
0 & \sqrt{2 k_B T/ \gamma} \end{matrix}\right] \left[\begin{matrix} \difd W_x (t) \cr \difd W_z (t)
\end{matrix}\right]
\end{equation}
 with constant  matrix 
\begin{equation}
\Jmat = 
\left[\begin{matrix} - \frac{\kappa v}{\Fstall} g^{\prime} (0) & \frac{\kappa v}{\Fstall} g^{\prime} (0) \cr
\frac{\kappa}{\gamma} & - \frac{\kappa}{\gamma}\end{matrix}\right].
\end{equation}
This matrix has eigenvalues $ 0 $ (corresponding to no resistance when the motor and cargo move together)
and
\begin{equation}
\lambda = - \frac{\kappa v g^{\prime} (0)}{\Fstall} - \frac{\kappa}{\gamma},
\end{equation}
whose magnitude describes the effective rate constant of the tail stretch $ X(t) - Z(t) $.  The inverse of its magnitude provides a good reference time scale.  As noted above $ \Fstall \gg \gamma v $, so
$ \lambda \approx - \frac{\kappa}{\gamma} $ and we choose $ \gamma/\kappa $ as the reference time scale.  This is precisely the time scale which characterizes the dynamics of the tail when attached to the cargo and a fixed object, which is appropriate, since the motor dynamics are relatively slow compared to the cargo, a fact we will see in the sequel.  This choice of reference time scale is also valid for a wide range of laser trap forces and for the case of multiple cooperative motors.  It is true that for laser trap forces large enough to bring the motors near to a stall, the length and time scales change, but  our aim is to select reference length and time scales that work well for a range of laser trap forces, from zero to stall.  By neglecting the laser trap force, we can specify length and time scales that are relevant for a much wider range of forces than those based on stalled configuration dynamics.

While mass units technically enter an equation involving force balance, none of the variables of interest involve mass units and we can simply divide the equation for $ Z(t) $ in Eq.~\ref{eq:motorcargosde} by $ \gamma $ so that 
mass units disappear from the left hand side and therefore also from every group of parameters on the right hand side.  Consequently, we effect our nondimensionalization by combining this simple manipulation with the explicit rescaling of variables with respect to length scale $ \sqrt{2\kB T/\kappa} $ (the factor of $ 2 $ for convenience) and time scale $ \gamma/\kappa $:
\begin{align*}
\Xnd (\tt) = \frac{X (\gamma \tt /\kappa)}{\sqrt{2\kB T/\kappa}}, \qquad \Znd (\tt) = \frac{Z (\gamma \tt /\kappa)}{\sqrt{2\kB T/\kappa}},
\end{align*}
where $\tt=(\kappa/\gamma) t$.  We also employ an important fact about Brownian motion: that scaling time, $W(a t)$, is equivalent in distribution to scaling space, $\sqrt{a}W(t)$, for any positive $a$. The governing equations in terms of these nondimensional variables therefore become
\begin{align}
\label{eq:motorcargorescsde}\difd \Xnd_i(\tt) &=  \epsilon g( s (\Xnd_i(\tt) - \Znd (\tt))) \, \difd \tt + \sigmc \, \difd W_i(\tt)\\
\nonumber	\difd \Znd (\tt) &= \sum_{i=1}^{\Nmot} \left(\Xnd_i (\tt) - \Znd (\tt) - \frac{\Ftrnd}{\Nmot}\right) \, \difd \tt  +  \difd W_z(\tt).
\end{align}
where we have identified the nondimensional parameters listed in Table~\ref{table:nondim}.  
\begin{table}
\begin{small}
\begin{center}	\begin{tabular}{l|c|c|c}
	\toprule \multicolumn{4}{c}{\textbf{Dimensionless Groups}} \\
	\midrule[0.08em]
		\hspace{0.7 in} Description & Label & Definition & Value \emph{in vitro} \\
	\midrule[0.08em]
	\begin{footnotesize}\begin{tabular}{l} Ratio of the friction force exerted by the \\ cargo when pulled by the natural motor \\ speed to the typical force exerted by the \\ tail due to thermal fluctuations. \end{tabular} \end{footnotesize} & \begin{large}$ \epsilon $ \end{large} & $\displaystyle\frac{v \gamma}{\sqrt{2k_BT\kappa}} $ & $3 \times 10^{-3}$ \\
	\midrule
	\begin{footnotesize}\begin{tabular}{l} \emph{Stallibility}. The ratio of the typical tail  \\ force to the stall force.  \end{tabular} \end{footnotesize} &
	\begin{large} $s$ \end{large} & $\displaystyle \frac{\sqrt{2k_BT\kappa}}{F_{\ast}} $ & 0.2 \\
	\midrule 
	\begin{footnotesize}\begin{tabular}{l} The ratio of the force applied by the laser \\ trap to the typical tail force. \end{tabular} \end{footnotesize} &
	\begin{large}$ \tilde{\theta}$ \end{large} & $\displaystyle \frac{\theta}{\sqrt{2 k_B T \kappa}} $ &  $-6$ to $12$ \\
	\midrule 
	\begin{footnotesize}\begin{tabular}{l} The ratio of the effective diffusion coefficient \\ of the motors to the standard Stokes-\\Einstein diffusion coefficient for the cargo. \end{tabular} \end{footnotesize} &
	\begin{large}$ \sigmc^2 $\end{large}& $\displaystyle 
	\frac{ \sigma^2}{2 k_B T/\gamma} $ & $6 \times 10^{-3}$\\
	\midrule 
	\begin{footnotesize}\begin{tabular}{l} Randomness parameter~\citep{Wang:2007b,Fisher:2001} \\ or inverse P\'{e}clet number~\citep{Kramer:2010} \\ with respect to length scale of \\ thermal fluctuations in tail.  \end{tabular} \end{footnotesize} &
	\begin{large}$ \rho$ \end{large} & $\displaystyle \frac{\sigma^2\sqrt{\kappa}}{v\sqrt{2 k_B T}} $ &  2 \\
	\bottomrule
	\end{tabular}
\end{center}
\end{small}
\caption{\label{table:nondim} Nondimensional groups with the range of values associated to the constants in Table~\ref{tab:const} with viscosity corresponding to that of water (representing an \emph{in vitro} environment).
}
\end{table}

We find that, \emph{in vitro}, the smallest of the nondimensional parameters is $ \epsilon \sim 3 \times 10^{-3}$. The diffusion coefficient ratio $ \sigmc \sim 8 \times 10^{-2}$ is also small relative to the parameters $ s $ and $ \Ftrnd $. 
This suggests the biophysical relevance of the following distinguished asymptotic limit, which we will see also has mathematically desirable properties: $ \epsilon $ is taken as the fundamental small nondimensional parameter, and we rewrite $ \sigmc $ as a parameter proportional to $ \sqrt{\epsilon} $, i.e., $ \sigmc = \sqrt{\epsilon \rho} $, where $ \rho $ is defined in Table~\ref{table:nondim}.
The nondimensional parameters $ s $, $ \tilde{\theta} $, and $ \rho $ will be treated as order unity and independent of the small parameter $ \epsilon $.  

As we have noted, the viscosity of the fluid environment ranges widely from the experimental setting to what is expected in the cytoplasm \cite{Gross:2007,Mitchell:2009}. We therefore treat $ \gamma $ as a flexible parameter, and our distinguished asymptotic  limit described above remains relevant as $ \gamma $ is varied (noting how it appears in the nondimensional groups in Table~\ref{table:nondim}), provided the parameter $\epsilon$ remains small, which we shall  see in Section~\ref{sec:low-viscosity}
means no greater than $O(10^{-1})$. If so, we call the dynamics \emph{diffusion dominated}; otherwise for reasons that become clear in Section \ref{sec:onefluc}, we call the dynamics \emph{drag dominated}. 

With the distinguished limit described above, the small parameter $\epsilon $ appears in the governing equations
in two places in the equation for the motor variables $ \Xnd_i (t) $.  Because of the appearance of $ \epsilon $ in the drift term and $ \sqrt{\epsilon} $ in the diffusion term, we can interpet $ \epsilon^{-1} $ as a ``slow'' nondimensional time scale characterizing the motor dynamics relative to the order 1 ``fast'' time scale characterizing the cargo (or equivalently, the connecting tail).   The physical justification for this time scale separation is that the cargo friction is very small relative to the spring force, and therefore the cargo (and spring) equilibrate quickly relative to the motor's motion along the track.  The relevance of thermal fluctuations, however, implies that this equilibration is not to a rest point, but rather to a stationary probability distribution of fluctuations about the minimal energy point of the spring. More specifically, consider the time change $\tt=\epsilon^{-1}\tavg$. 
Application of stochastic averaging theory (see \cite{Freidlin:1998,Skorokhod:2002,Khasminskii:1968}) allows the derivation of effective equations for the motor coordinates, without explicit reference to the cargo position (see~\cite{Peskin:1994,Peskin:1995,Elston:2000}
for analogous reductions for single-motor case).
When $ \epsilon $ is small, the behavior of the motor dynamics $  \{\tilde{X}_i (\epsilon^{-1} \tavg)\}_{i=1}^n $
is well-approximated, in the sense of weak convergence, by the solutions $ \{\Xavg_i (\tavg)\}_{i=1}^n $ of the following SDE,
\begin{equation}
\difd \Xavg_i(\tavg)=\gavg_i(\{\Xavg_i\}_{i=1}^{\Nmot}(\tavg))\, \difd \tavg + \sqrt{\rho} \difd W_i(\tavg), \hspace{0.1in} i=1, \cdots, \Nmot
\label{eq:motavg}
\end{equation}
where
\begin{align}
\gavg_i(\vec{x};\Ftrnd) &= \int_{\mathbb{R}} g\left(s(x_i-z)\right) \pi_Z(\vec{x},z;\Ftrnd) \, \difd z; \label{eq:g-avg}\\ 
\pi_Z(z;\vec{x},\Ftrnd) &= \sqrt{\frac{\Nmot}{\pi}} \exp\left[-\Nmot \left(z-\left[\frac{\sum_1^\Nmot x_i}{\Nmot} - \frac{\Ftrnd}{\Nmot}\right]\right)^2\right].
\label{eq:cargo-distr}
\end{align}
The function $ \pi_Z$ is the density of the stationary distribution of the cargo position, given fixed motor positions $ \vec{x} = \{x_1,\ldots,x_{\Nmot}\}$.  

Note that while the cargo variable has been removed from explicit consideration, the effective dynamics of the motors are coupled together because they are all connected to the cargo.  That is, the effective drift coefficient 
$ \gavg_i  $ for motor $ i $ depends on the current positions of not only motor $i $ but the other motors as well.  By following the changes of variables we have performed in this section, we conclude that the simplified equation \eqref{eq:motavg} provides a good approximation (provided the parameter $ \epsilon $ is sufficiently small) to the original model equations \eqref{eq:motorcargosde} in the sense that the statistical dynamics of the solutions $ \{X_i (t)\}_{i=1}^{\Nmot} $ to the true model equations \eqref{eq:motorcargosde} are well approximated by the statistical dynamics of $ \sqrt{2 \kB T/\kappa} \Xavg ( \tavg )$ evaluated at the rescaled nondimensional time $ \tavg = v  \sqrt{\kappa/(2 \kB T)}t$,
where
$ \{\Xavg_i (\tavg)\}_{i=1}^{\Nmot} $ are solutions to the simplified equations \eqref{eq:motavg}.  We will explicitly check the quality of this approximation in Section~\ref{sec:onefluc}.

\section{Analysis of transport by one motor.}
\label{sec:onefluc}

Numerous authors have expressed concern about translating experimental observation into \emph{in vivo} predictions. Shubeita et al \cite{Shubeita:2008} and Mitchell \& Lee \cite{Mitchell:2009} provide excellent discussions on the topic. At issue is that the now classical force-velocity relationships \cite{Visscher:1999,Carter:2005} were measured in the presence of a water-like fluid environment, while the viscosity \emph{in vivo} may be 100-1000 times greater.  To infer the effect of high viscosity, Shubeita et al \cite{Shubeita:2008} attempted to measure the stall force by trapping lipid droplets that are driven by kinesin-1 motors.  While they measured the low viscosity stall force to be near 5 pN, the apparent stall force \emph{in vivo} was closer to 2.4 pN.  In a more theoretical treatment, Mitchell and Lee \cite{Mitchell:2009} used the Stokes-Einstein relationship to estimate the drag force incurred on a motor via various shapes and types of cargo.  Their goal was to estimate whether a single motor could overcome the anticipated drag force in order to produce experimentally observed motor speed. 

As was indicated in the previous section, we find that there is a qualitative change between the low and high viscosity regimes.  We will show that nevertheless our modeling framework can provide a smooth interpolation
between them to explain how the presence and dynamics of the cargo affects the speed and effective stall force for a single motor.

\subsection{Low viscosity regime and inference for $g$}
\label{sec:low-viscosity}

When the viscosity of the fluid environment is similar to that of water, we can compute the mean velocity and diffusivity of the system by averaging the force-velocity curve $g$ against the quasi-stationary distribution of the cargo,  $\pi_{Z}$, with the motor position essentially held constant. 
For the linear spring model we are using in the main text, the stationary distribution of the cargo is Gaussian~\eqref{eq:cargo-distr}, and the average velocity and effective diffusivity for one motor with cargo can be expressed as
\begin{align}
	\nonumber \vf{1}{\Ftrnd} &= \lim_{\tavg \to \infty} \frac{\Xavg(\tavg)}{\tavg} = \gavg(\Ftrnd), \\
	\label{eq:one-motor-diffusivity} \df{1}{\Ftrnd} &= \lim_{t \to \infty} \frac{1}{2\tavg} \left(\Xavg(\tavg) - \vf{1}{\Ftrnd}) \tavg \right)^2 = \frac{\rho}{2},
\end{align}
where we have introduced the Gaussian average:
\begin{equation} \label{eq:one-motor-guass-avg}
	\gavg(\Ftrnd) = \int_{\rbb} g(s(x - z)) \frac{1}{\sqrt{\pi}}e^{-(z - (x - \Ftrnd))^2} dz = \gaussbigbig{s\Ftrnd}{\frac{s^2}{2}}{g}.
\end{equation}
The last equality follows by introducing the variable $y = s(x-z)$.  As expected, this effective drift $ \vf{1}{\Ftrnd} $ is not dependent on the location of the motor, only its averaged distance from the cargo. 

Written in terms of the original parameters, we have
\begin{equation} \label{eq:velocity-one-motor}
	\lim_{t \to \infty} \frac{X(t)}{t} \approx \vforig{1}{\theta} := v \gaussbigbig{\frac{\theta}{F_*}}{\frac{k_B T \kappa}{F_*^2}}{g}.
\end{equation}
\begin{floatingfigure}[r]{0.48\textwidth}
\centering
\includegraphics[scale=.38]{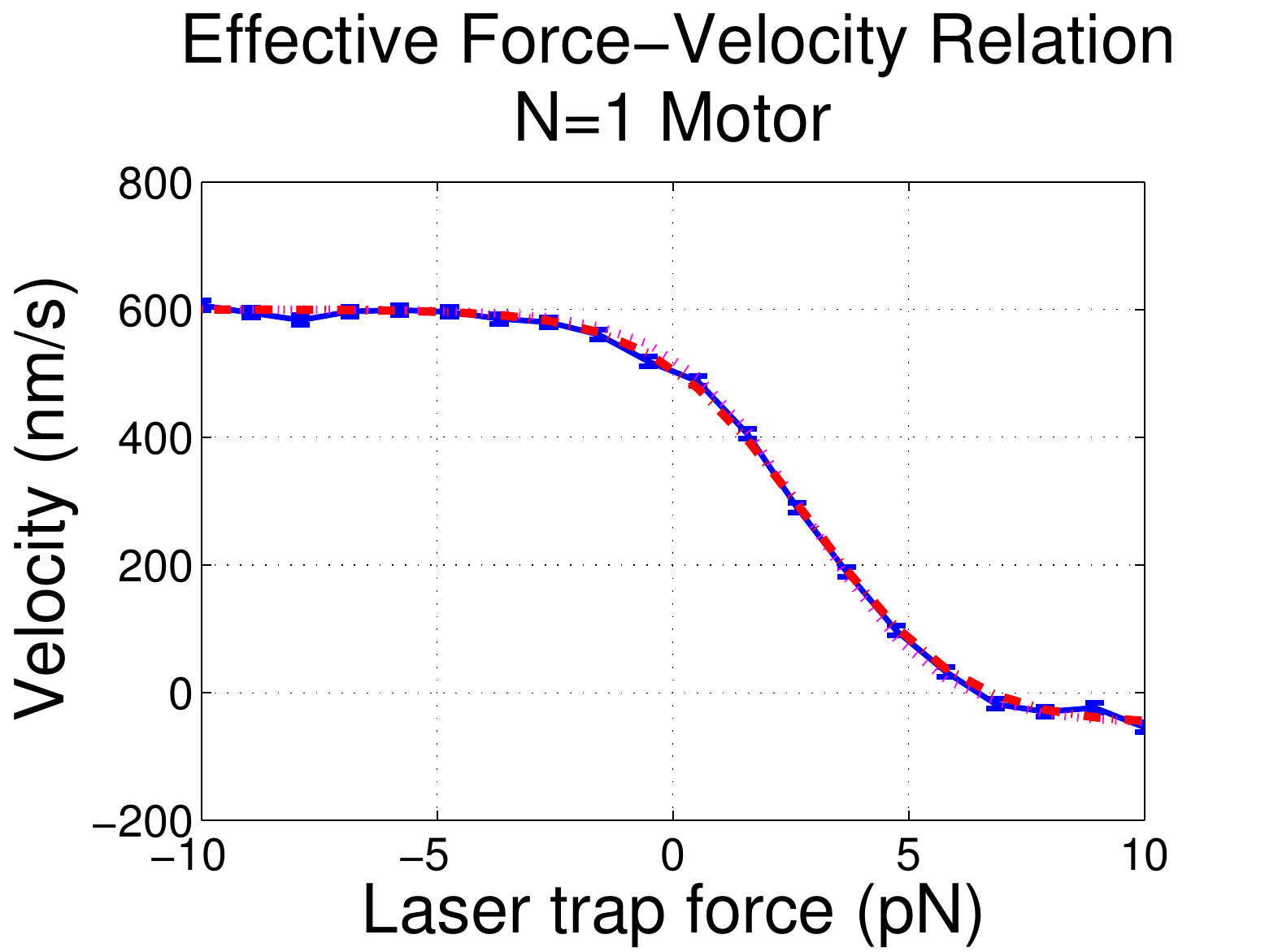} 	\caption{\label{fig:oneforcevelocity} Comparison of the effective force-velocity relationship of a single motor attached to a cargo from direct stochastic simulations of the model equations (\ref{eq:motorcargosde}) (blue with error bars denoting sample standard deviation) and the stochastic averaging result (\ref{eq:one-motor-guass-avg}) (red solid).   We also show (magenta dashed) the instantaneous force-velocity curve for a motor $ v g (\theta/F_*) $ which neglects thermal fluctuations.  The curves are somewhat difficult to distinguish as they are almost coincident.  Parameter values are listed in Table~\ref{tab:const}.
}
\end{floatingfigure}
Since $ \frac{k_B T \kappa}{F_*^2} = 0.03 \ll 1 $, it follows that for a given external force $\theta$, $\vforig{1}{\theta} \approx v g(\theta / F_*)$, which
corresponds to the naive estimate that the motor should move overall at a speed corresponding to the applied force $ \theta $ and the force-velocity relationship $ g $.

Furthermore, this characterization of the average velocity gives a clean representation of the effects of perturbing the physical parameters of the system. For example, the velocity given by \eqref{eq:velocity-one-motor} does not depend on the viscosity of the fluid environment, consistent with observations \emph{in vitro} \cite{Gross:2007} and \emph{in silico} \cite{Kunwar:2010}. The stability of the velocity with respect to viscosity encourages the notion the observed force-velocity curve should be robust as long the assumption that the ATP concentration remains constant holds.

The quantity \eqref{eq:velocity-one-motor} is the experimentally observed velocity, so the theoretical instantaneous force-velocity curve $g$ is the function that best fits $V_1(\theta)$ after ``inverting'' the Gaussian average.  Such inference can be done using the recently developed non-parametric statistical methods of Asencio et al \cite{asenciofunctional}. Figure~\ref{fig:oneforcevelocity} compares the instantaneous force-velocity curve for a motor with the ``dressed'' version that accounts for the effects of thermal fluctuations in the cargo (and therefore connecting tail).

A second example of a prediction made by \eqref{eq:velocity-one-motor} is found by perturbing the spring constant $\kappa$. For small $\theta$, we note that the velocity decreases when the spring constant increases.  This is due to the downward concavity near zero and the fact that increasing $\kappa$ widens the Gaussian average.

One final novelty of the formula \eqref{eq:velocity-one-motor} is that if $g$ were concave up near 0, as is the case in the low ATP regime, the opposite inequality would hold, meaning that the fluctuations would improve motor progress.  This can be interpreted as an example of the phenomenon called \emph{strain gating} \cite{kunwar2008stepping}. It has been observed that when force is applied to the motors, they can take steps even when little or no ATP is available. In the presence of cargo fluctuations, the cargo will explore regions of space that aid the motor in taking steps.  Because 
the motor would respond more to pulls from the cargo when it fluctuates in the forward rather than the backward direction, the average velocity of the system is greater in the forward direction than it would be in absence of cargo fluctuations.

All of these properties hold as long as the time scale separation holds between the motor and cargo dynamics, i.e., as long as $\ep$ is relatively small. In Figure~\ref{fig:oneforcevelocity}, we present the average velocity of the original system as a function of the cargo friction, which is proportional to the solvent viscosity. 
In agreement with a similar plot in \cite{Kunwar:2010}, the velocity holds approximately constant for two orders of magnitude greater than that of water, or in our nondimensional terms, for $ \epsilon \lesssim 0.1$.  Beyond that we must use a different analysis, which is provided in the next section.

\subsection{Crossover to high viscosity regime}
\label{sec:high-viscosity}

Because the stochastic averaging no longer holds when the solvent viscosity (or bead size) is so large that $\ep \sim 1$ or greater, we return to the system \eqref{eq:motorcargorescsde}.  In the one motor case, there is a simplification that arises from introducing $Y = X - Z$, the time-dependent distance along the microtubule between the motor and cargo. After a nondimensionalization and under the timescale $\tavg = \ep^{-1} \tnd$, we obtain
\begin{align}
\nonumber \difd \Xna(\tavg) &=  g( s \Yna(\tavg)) \, \difd \tavg + \sqrt{\rho} \, \difd W_x(\tavg)\\
	\difd \Yna (\tavg) &= [g(s \Yna(\tavg)) - \frac{1}{\ep}(\Yna (\tavg) - \Ftrnd )] \, \difd \tavg  +  \sqrt{\rho + 1/\ep} \, \difd W_y(\tavg).
\label{eq:motorcargorescsde-nd}
\end{align}
where $W_y = \frac{1}{\sqrt{\rho + 1/{\epsilon}}}(\sqrt{\rho}W_x - \sqrt{1/{\epsilon}} W_z)$ is a standard Brownian motion (correlated with $ W_x $). We see that the drift term of $\Yna$ is a function of  $\Yna$ alone, and the diffusion term is simply a scaled Brownian motion. Now, as a one-dimensional SDE, the drift term for $\Yna$ can be written as the derivative of a potential function, and we can solve for the stationary distribution of $ \Yna $ as an exponential of this potential:
\begin{equation} \label{eq:defn-pi-y}
	\pi_Y(y) = C\exp\left[\frac{1}{1 + \ep \rho} \left(-(y-\Ftrnd)^2 + 2\ep \int_0^{y} g(sy') \, \difd y' \right)\right].
\end{equation}
This formula reveals the manner in which the high viscosity regime is a perturbation of the low viscosity regime: when $\ep$ is small, $\pi_Y$ is essentially a Gaussian density with mean $\Ftrnd$. As before, to find the long-term velocity we average the drift term for $\Xnd$ against this stationary density,
\begin{equation} \label{eq:high-viscosity-velocity}
	\lim_{t \to \infty} \frac{X(t)}{t} = \lim_{\tilde t \to \infty} v\frac{\Xna(\tavg)}{\tavg} = v \int_{\rbb} g(sy) \pi_Y(y) dy.
\end{equation}
This expression can be rewritten more suggestively as:
\begin{align}
\nonumber \lim_{t \to \infty} \frac{X(t)}{t} 
&= v \int_{\rbb} \frac{1 + \ep \rho}{2 \ep} \left[\frac{\partial \pi_Y (y)}{\partial y} + \frac{2 (y - \Ftrnd)}{1 + \ep \rho} \pi_Y (y)\right] \, \difd y \\ 
&= \frac{v}{\ep} \left( \int_{\rbb} (y - \Ftrnd) \pi_Y (y) \, \difd y\right) = \frac{v (\langle \Yna \rangle - \Ftrnd)}{\ep}
= \frac{\kappa  \langle Y \rangle - \theta}{\gamma} \label{eq:convertvep}
\end{align}
where $ \langle \cdot \rangle $ denotes a statistical average over the fluctuations in the connecting tail length (governed by $ \pi_Y (y) $).  The final expression indicates that the speed of the system is given by the average net force on the cargo (connecting tail force minus applied load) divided by its friction coefficient.  Note that the nondimensional expressions approach $ 0/0 $ in the $ \ep \rightarrow  0 $ limit, which is why we did not recast our stochastic averaging expression \eqref{eq:velocity-one-motor} in this form.

To gain some further intuition, it is worth considering a deterministic version of the system \eqref{eq:motorcargorescsde-nd} defined by the system of ODEs,
\begin{align*}
	\ddtavg \, \Xndd = g(s \Yndd), \qquad 
	\ddtavg \, \Yndd = g(s \Yndd) - \frac{1}{\ep} (\Yndd - \Ftrnd).
\end{align*}
To find the average velocity, we find the stable equilibrium for $\Yna(t)$, namely a value $y_*$ such that $g(sy_*) = \frac{1}{\ep}(y_* - \Ftrnd)$.
Because $g$ is decreasing in $y_*$, the right-hand side is of $ \ddtavg \, \Yndd $ is strictly decreasing, so we know that if an equilibrium exists, this point is unique.  For the moment, assume that $g$ is linear for $y \in [0,1]$ and therefore of the form $g(y) = 1 - y$ to maintain consistency with Assumption \ref{a:g-qual}.  Then the above equation becomes $1 - sy_* = (y_* - \Ftrnd)/\ep$
and solving for $y_*$ gives $y_* = (\ep + \Ftrnd)/(\ep s + 1)$.

This implies that, for $\Ftrnd \in [0,1]$, the asymptotic nondimensional velocity for linear $g$ satisfies
\begin{equation*}
	\ddtavg \, \Xndd = g(s y_*) = \frac{1}{\ep} (y_* - \Ftrnd) = \frac{1 - s \Ftrnd}{1 + s \ep}.
\end{equation*}
Note the similarity to the penultimate expression in Eq.~(\ref{eq:convertvep}), with the statistical average over fluctuations now replaced by a deterministic value for the stable length $ y_* $ of the connecting tail.
In the original parameters of the problem, we have the approximation
\begin{equation} \label{eq:velocity-approx}
	V_1(\theta) \approx \frac{v(1 - \theta/F_*)}{1 + \gamma v/F_*} \, \text{ for } \theta \in [-F_*,F_*].
\end{equation}
In fact, this approximate formula remains unchanged even in the presence of thermal fluctuations ($\rho > 0 $), as can be shown by by simply substituting $ g(y) = 1- y $ into \eqref{eq:high-viscosity-velocity}. 
Figure~\ref{fig:onehigviscosity} shows that this simple explicit formula accurately represents the effective behavior of a single motor even into the high viscosity regime (beyond the validity of the stochastic averaging formula \eqref{eq:one-motor-diffusivity}), for applied forces up to 5 pN in magnitude. 

\begin{figure}[ht]
\begin{center}
\includegraphics[scale = 0.4]{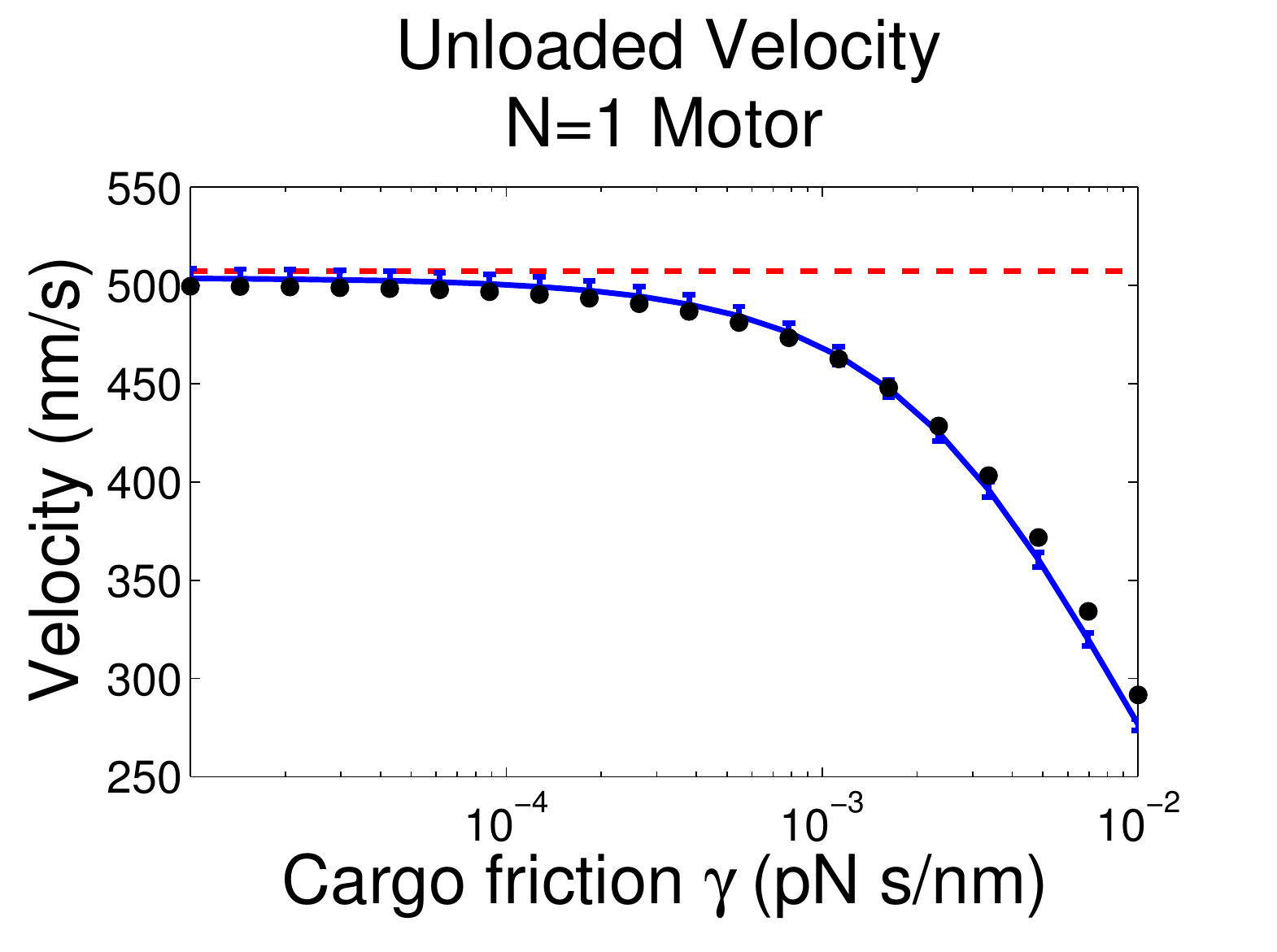}
\includegraphics[scale=.4]{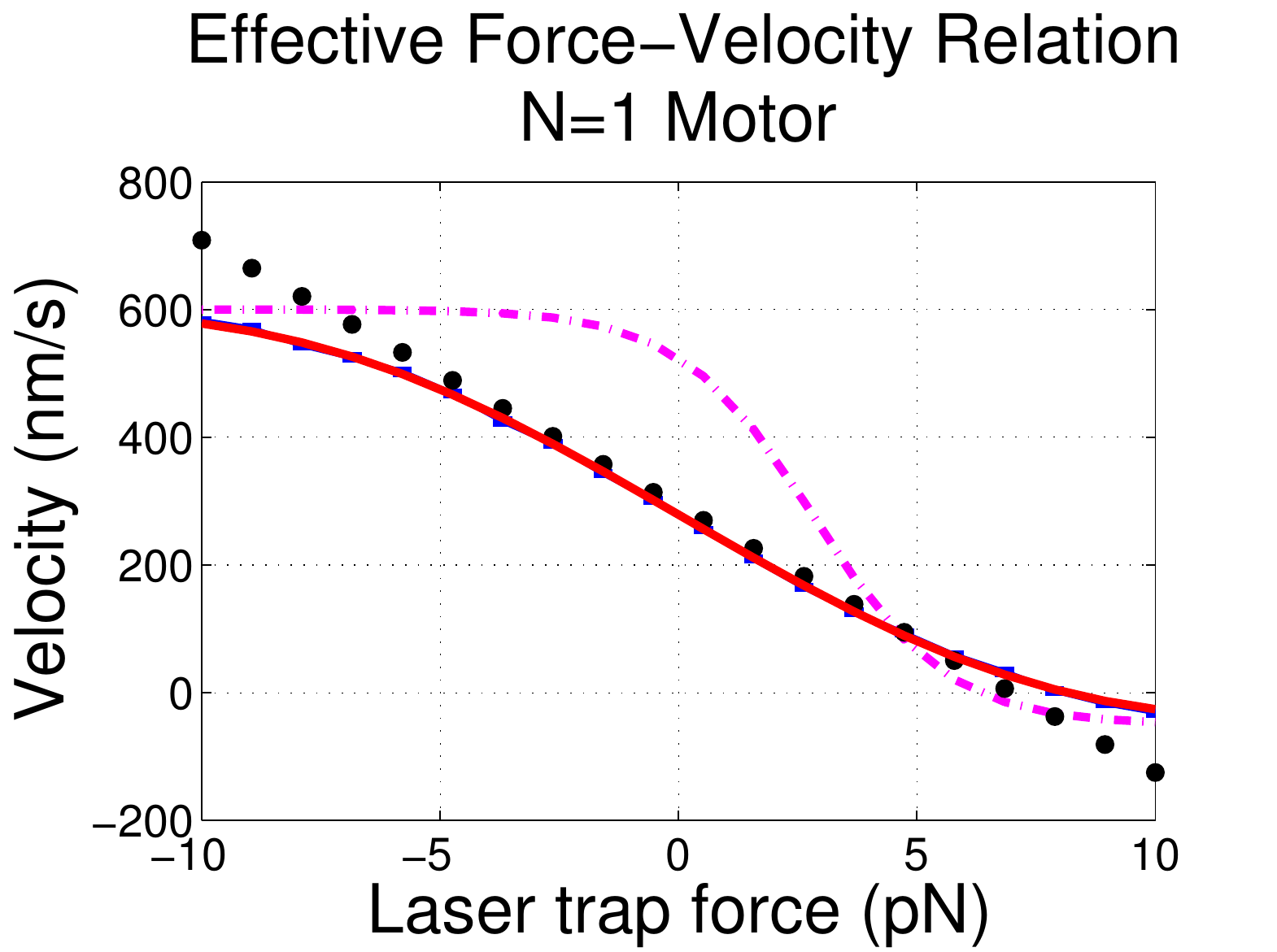}     \caption{\label{fig:onehigviscosity} Performance of the linear approximation. Left panel: Velocity of cargo attached to single motor as function of cargo friction parameter $ \gamma $.  Results of direct simulation of (\ref{eq:motorcargosde}) are indicated by the blue curve, with error bars representing one sample standard deviation.  The black dotted curve represents the linear approximation formula (\ref{eq:velocity-approx}), whereas the red dashed line indicates the theoretical value corresponding to the $ \epsilon \downarrow 0 $ limit.   The values of $ \gamma $ presented correspond to a bead of size $ a = 500 nm $~\citep{guydosh2006backsteps}
and solvent viscosity ranging from the viscosity of water (at 300 K) to 1000 times that value. Right panel:
Comparison of the effective force-velocity relationship of a single motor attached to a cargo with friction $ \gamma = 10^{-2} \mathrm{pN} \cdot \mathrm{s}/\mathrm{pN} $ ($ \epsilon = 3 $)  from direct stochastic simulations of the model equations (\ref{eq:motorcargosde}) (blue with error bars denoting sample standard deviation), the precise theoretical formula (\ref{eq:high-viscosity-velocity}) with the high viscosity modification (red solid), and the explicit formula (\ref{eq:velocity-approx}) for high viscosity modification based on a linear approximation (black dotted).
  We also show (magenta dashed) the instantaneous force-velocity curve for a motor $ v g (\theta/F_*) $ which neglects thermal fluctuations.    Other than the cargo friction $ \gamma $ (and proportionally related viscosity $ \eta $), parameter values are listed in Table~\ref{tab:const}.
}
\end{center}
\end{figure}

In particular, this formula gives a simple representation for how the velocity of a single motor with zero external force should decrease as a function of viscosity. We immediately read off a critical nondimensional ratio, $\gamma v / F_*$, which compares the motor's stall force to the drag force that would be felt by the cargo if the motor could achieve its natural low viscosity speed.  As can be noted from Figure \ref{fig:onehigviscosity}, the changeover from this ratio being negligible to significant occurs near the \emph{in vivo} viscosity estimated by Mitchell \& Lee \cite{Mitchell:2009}.  

The formula \eqref{eq:velocity-approx} does not do well, however, at large applied force; it predicts the stall force to
remain exactly $F_*$, regardless of the viscosity of the fluid.  
As we will show in Section \ref{sec:compare-no-fluc}, the stall force of the full stochastic model in \eqref{eq:high-viscosity-velocity} remarkably increases with the viscosity of the fluid due to the upward concavity of $g$ near stall force.  
This claim is counter to the observations of Shubeita et al \cite{Shubeita:2008}, for example.  A deeper version of the model and further analysis are necessary for making predictions in the near-stall regime.

\section{Analysis of transport by multiple identical motors}
\label{sec:multimotor}

Experimentally it is very difficult to determine precisely how many motors are attached to an observed cargo at a given time.  Typically, in order to get some certainty, transport is observed in a fluid medium that has a very low concentration of kinesin. A majority of the observed cargos are immobile (or are undergoing simple diffusion) but a minority are seen to move almost deterministically for a time.  Because the kinesin are so rare in the solution, it is hypothesized that only a single kinesin is bound to the cargo and a microtubule.  When the concentration of motor proteins is increased, not only is the mobile portion of the cargo population increased, but the run length and response to external forcing is changed.  In particular, motor concentration is expected to be often large enough \emph{in vivo} for multiple motors to bind to a cargo.   This motivates the development of quantitative relationships between \emph{in vitro} observations involving a single motor bound to the cargo to \emph{in vivo} predictions in which multi-motor transport is presumed to be relevant.  The purpose of most of this section is to derive such analytical connections within the modeling framework laid out in Section~\ref{sec:discussion}.

A striking observation from theorists and experimentalists is that the attachment of multiple motors to a given cargo does not necessarily imply a larger average velocity of the motor-cargo system. This has been observed separately by Kunwar et al \cite{kunwar2008stepping}, Wang and Li \cite{Wang:2009} and Bouzat and Falo \cite{Bouzat:2010}.  Interestingly, each group gives a distinct reason for the phenomenon: Kunwar et al cite detachment and attachment dynamics; Bouzat and Falo cite excluded volume effects among the motors; and Wang and Li cite the structure of the force-velocity curve.  We highlight the multiplicity of explanations in order to emphasize the richness of the underlying biophysics and the benefit of careful analysis. Our model also reproduces the two-slower-than-one result in the low viscosity environment, for reasons closest to that of Wang and Li. However, as we see in Section \ref{sec:twomotor-highvisc}, our model predicts that this phenomenon disappears when viscosity is closer to that expected in the cytoplasm.

\subsection{Analysis for a semi-deterministic approximation}
\label{sec:nocargofluc}
The two sources of stochasticity, cargo fluctuations and variation in the spatial configuration of the motors, both contribute in a non-trivial way to two-motor transport.  When both are eliminated, we have the model for force distribution introduced by M\"uller, Klumpp and Lipowsky \cite{Muller:2008pnas,Muller:2008pnas,Muller:2010}.  Each motor is presumed to bear half the load of the cargo and the resultant drift term in the original parameters of the system is $v g(\theta / 2 F_*)$.  We will refer to this as the \emph{force-balance theory}.

In order to distinguish between the contributions of the two sources of randomness, we add one back and then the other.  Because it turns out to account for a dominant contribution of new behavior, we first analyze the system with changing spatial configurations in the absence of cargo fluctuations. Versions of such an approximation, which is not a physically natural asymptotic limit of the system, have been explored before \cite{Wang:2009, jamison:2010}.  Our analysis differs from prior work in our SDE approach to describing the evolution of the configuration of the motors over time.

By inspection of the cargo's stationary distribution $\pi_Z$ from \eqref{eq:cargo-distr}, for a given motor configuration $\vec{x} = (x_1, x_2, \ldots, x_n)$, its mean position is $\znf(\vec{x}) = \frac{1}{\Nmot}\left[\big(\sum_1^\Nmot x_i\big) - \Ftrnd\right]$.  For our simplified model with non-fluctuating cargo, we simply assume the cargo instantaneously adjusts to this mean position as the motor coordinates evolve.  We denote the dynamics of the motors with non-fluctuating cargo by $\{\xnf(\tnf)\}_{i \in \{1,N\}}$, which satisfy the system of SDE
\begin{equation} \label{eq:defn-xnf}
	\difd \xnf_i(\tnf) = g\big(s[\xnf_i(\tnf) - \znf(\xnfvec(\tnf))]\big) \difd \tnf + \sqrt{\rho} \difd W_i(\tnf).
\end{equation}

For a two motor system, the cargo position is given by $\znf(\vec{x}) = (x_1 + x_2)/2 - \Ftrnd/2$. It follows that the nondimensionalized \emph{opposing} force felt by the first motor is $s(x_1 - \znf(\vec{x})) = s (x_1 - x_2 + \Ftrnd)/2$. The opposing force felt by the other motor is $s(x_2- \znf(\vec{x})) = s (x_2 - x_1 + \Ftrnd)/2$. To simplify notation, we introduce a new function $G(r) := g(-sr/2)$ which represents -- in the zero external force setting -- the instantaneous drift induced on a motor when the other motor is a given signed distance away.  That is, letting $r = x_1 - x_2$, (which is to say that $x_2$ is a signed distance $-r$ from $x_1$), the drift experienced by the first motor is $g(s(x_1 - x_2)/2) = G(-r)$.  When there is an external applied force $\Ftrnd$, the drift of the first motor is $g(s(x_1 - x_2 + \Ftrnd)/2) = G(-r - \Ftrnd)$ and the drift of the other motor is 
$ g(s(x_2-x_1 + \Ftrnd)/2) = G(r-\Ftrnd) $.

By changing variables to center-of-mass $\mnf(\tnf)=\frac12 (\xnf_1(\tnf)+\xnf_2(\tnf)) $ and difference $\rnf(\tnf)=\xnf_1(\tnf) - \xnf_2 (\tnf) $ coordinates, we obtain
\begin{align}
\nonumber	\difd \mnf(\tnf) &= \frac{1}{2} \left[G(\rnf(\tnf) - \Ftrnd) + G(-\rnf(\tnf) - \Ftrnd))\right] \difd \tnf + \sqrt{\frac{\rho}{2}}\,\difd W_m (\tnf),\\
\difd \rnf(\tnf) &= -\left[G(\rnf(\tnf) - \Ftrnd) - G(-\rnf(\tnf) - \Ftrnd)\right]\, \difd \tnf + \sqrt{2\rho}\, \difd W_r (\tnf). \label{eq:defn-mnf-rnf}
\end{align}
where we have
introduced  $ W_m \equiv \frac{1}{\sqrt{2}} (W_1 + W_2 ) $ and $ W_r  \equiv \frac{1}{\sqrt{2}} (W_1 - W_2) $, which can be treated in law as independent standard Brownian motions.

The process $\rnf$ is independent of $\mnf$ and the drift term can be viewed as the derivative of a potential function. 
Therefore, introducing the notation $G_\pm(r;\theta) = G(r - \theta) \pm G(-r - \theta)$, we can express the density of the stationary distribution of motor separation process $R$ as
\begin{equation} \label{eq:defn-pi}
\pi_R(r;\Ftrnd)=C_R \exp\left[-\frac{1}{\rho}\int_{-\infty}^r G_{-}(r^\prime;\theta)  \difd r^{\prime}\right], \ \ -\infty< r< \infty, 
\end{equation}
where $C_R$ is a normalizing constant.  The midpoint process $\mnf(t)$, which we use as a proxy for the position of the entire system, is then a functional of the separation process $\rnf(t)$ plus a Brownian motion,
\begin{equation*}
	\mnf(\tnf) = \int_0^{\tnf} \frac{1}{2} G_{+}(\rnf(t');\theta) \difd t' + \sqrt{\frac{\rho}{2}}\, W_m (\tnf). \label{eq:intm}
\end{equation*}
Applying the law of large numbers and the ergodic theorem for diffusion processes, we can calculate the asymptotic average velocity $\lim_{\tnf \to \infty} \mnf(\tnf) / \tnf$ by integrating the drift of $\mnf$ against the stationary distribution $\pi_R$,
\begin{equation} \label{eq:vel-two-nf}
	\vnf{2}{\Ftrnd} := \lim_{\tnf \to \infty} \frac{\mnf(\tnf)}{\tnf} = \int_{\rbb} \frac{1}{2} G_{+}(r;\theta) \pi_R(r;\Ftrnd) \difd r.
\end{equation}

The asymptotic diffusivity is given as follows
\begin{equation} \label{eq:diff-two-nf}
	\dnf{2}{\Ftrnd} := \frac{\rho}{4}+ \int_{-\infty}^\infty   \left( \int_{-\infty}^r \left( \frac{1}{2} G_{+}(r';\theta) - \vnf{2}{\Ftrnd} \right) \pi_{R}(r';\Ftrnd) \difd r' \right)^2  \frac{1}{ \rho \pi_{R}(r;\Ftrnd) }  \difd r.
\end{equation}

This effective diffusivity is obtained by way of the central limit theorem for stationary stochastic differential equations \cite{kutoyants2004statistical}. We center the process $ \mnf $ about its mean  and rescale by the square root of time, and use Eq.~(\ref{eq:intm}) to write: 
\begin{align}
	\label{eq:diffusivity}
	\frac{1}{\sqrt{\tnf}} \left(\M(\tnf) - \vnf{2}{\Ftrnd} \tnf \right) &=  \frac{1}{\sqrt{\tnf}} \int_0^{\tnf} \frac12 G_+(\R(t');\Ftrnd)) - \vnf{2}{\Ftrnd} dt'  + \sqrt{\frac{\rho}{2\tnf}} W_m(\tnf).
\end{align}

The first term on the right converges in distribution to a Gaussian random variable with mean zero and variance
\begin{equation*}
4 \int_{-\infty}^\infty   \left( \int_{-\infty}^r (\frac{1}{2}  \gtwoavg_+(r';\theta)- \vnf{2}{\Ftrnd} ) \pi_R(r';\Ftrnd) \difd r' \right)^2  \frac{1}{2 \rho \pi_R(r;\Ftrnd) }  \difd r
\end{equation*}
by the central limit theorem given in Chapter 1 of \cite{kutoyants2004statistical}.

Together with the Brownian motion term in \eqref{eq:diffusivity}, which is independent of $R(t)$,  we conclude that the external force dependent diffusivity is given by \eqref{eq:diff-two-nf}.  Thus for a given large $\tnf$, 
$$
\M(\tnf) \sim \Driftsys(\Ftrnd) \tnf+\sqrt{2 \dnf{2}{\Ftrnd} \tnf} \  Z,
$$
where $Z$ is a standard normal random variable.

\subsection{Comparison of One-Motor and Two-Motor Systems}
\label{sec:compare-no-fluc}

We are now ready to characterize how the spatial distribution of the motors affects the effective transport properties relative to the simpler models~\cite{Muller:2008pnas,Muller:2008jstatphys} in which all bound motors are assumed to share the load equally.

When there is no external forcing, we can understand the interference between the motors via the following thought experiment: Suppose the motors are separated by a distance $r$ and that the cargo is fixed at the mid-point between the motors. Each spring connecting the motors will be stretched equally inducing an opposing force with signed magnitude of $\pm sr / 2$ on the leading and trailing motors respectively (positive opposing force on the leading motor; negative opposing force on the trailing motor). The force-velocity curves in the literature~\citep{Visscher:1999,Kojima:1997,Shtridelman:2008,Shtridelman:2009} seem to indicate that a ``helpful'' force can only speed up the motor a small amount, whereas the same force applied in the opposite direction can slow the motor down considerably.  Therefore the mean of the two induced velocities is less than the velocity of a single motor, as characterized by the inequality
\begin{equation*}
	\frac{1}{2} (g(rs/2) + g(-rs/2)) < g(0), \qquad r \neq 0.
\end{equation*}
This is the motivation for emphasizing the concavity properties of $g$ in Assumption \ref{a:g-qual}. 

In the presence of an external force $ \Ftrnd$, the average velocity of the two motors will be less that that of one motor when $\Ftrnd $ and the separation distance $r$ satisfy 
\begin{equation} \label{eq:g-two-slower-nf}
	\frac{1}{2} (g(s(r + \Ftrnd)/2) + g(s(-r + \Ftrnd)/2) < g(s\Ftrnd).
\end{equation}
For the class of $g$ functions typical of force-velocity curves (such as our schematic example in Subsection~\ref{sec:g-and-h}), this inequality will only hold for relatively small values of external force $ \Ftrnd $ and a possibly restricted range of separation distances $ r $.  For example, note that when $r = 0$, the left-hand side is $g(s\Ftrnd/2)$ which is actually \emph{greater than} $g(s\Ftrnd)$ when $\Ftrnd  > 0$. 
Nevertheless, we must average the left hand side of \eqref{eq:g-two-slower-nf} against the stationary distribution of the motor separation $r$ \eqref{eq:defn-pi}, to attain a comparison of the average velocities of one-motor and two motor systems.  This explains the restrictions in conditions under which we can be confident that two motors bound to a cargo will be slower than one.

\begin{prop}[Two motors can be slower than one] 
	\label{thm:qual-nf}
	Suppose that the force-velocity function $g$ satisfies Assumption \ref{a:g-qual}. Then for any given value of the ratio of typical spring force to stall force, $s$, there exists a positive value $ \Ftrnd_c (s) $ such that for all  $ \Ftrnd < \Ftrnd_c (s)$,
	\begin{equation} \label{eq:low-force-nf}
		\vnf{2}{\Ftrnd} < \vnf{1}{\Ftrnd}. 
	\end{equation}
\end{prop}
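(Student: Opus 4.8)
The plan is to study the sign of the velocity deficit $D(\Ftrnd) := \vnf{1}{\Ftrnd} - \vnf{2}{\Ftrnd}$. For a single motor with instantaneously relaxing cargo, the cargo sits at $\znf(x_1) = x_1 - \Ftrnd$, so the motor feels the constant opposing force $s\Ftrnd$ and $\vnf{1}{\Ftrnd} = g(s\Ftrnd)$. Using \eqref{eq:vel-two-nf} and the fact that the two-motor drift $\frac12 G_+(r;\Ftrnd) = \frac12[g(s(r+\Ftrnd)/2) + g(s(\Ftrnd - r)/2)]$ is an even function of the separation $r$, I would write $D(\Ftrnd) = \int_{\rbb} [g(s\Ftrnd) - \frac12 G_+(r;\Ftrnd)]\,\pi_R(r;\Ftrnd)\,\difd r$, reducing the claim to showing this average is positive for all $\Ftrnd$ below a positive threshold.

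First I would establish the base case $\Ftrnd = 0$, which is the conceptual heart. Because $G_-(\cdot;0)$ is an odd function of $r$, the exponent in \eqref{eq:defn-pi} is even up to normalization, so $\pi_R(\cdot;0)$ is a symmetric density. The integrand becomes $g(0) - \frac12(g(sr/2)+g(-sr/2)) = g(0) - \frac12\gsym(sr/2)$, and the strong-concavity property~(iii) of Assumption~\ref{a:g-qual} gives $\gsym(f) < \gsym(0) = 2g(0)$ for every $f \neq 0$. Since $\pi_R(\cdot;0)$ is a strictly positive continuous density, integrating this strict pointwise bound yields $D(0) > 0$, that is $\vnf{2}{0} < \vnf{1}{0}$.

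Next I would show that $\Ftrnd \mapsto D(\Ftrnd)$ is continuous, which I expect to be the main obstacle. The one-motor term $g(s\Ftrnd)$ is smooth, so the work lies in the two-motor integral, whose $\Ftrnd$-dependence enters through the bounded smooth integrand $G_+$ and, more delicately, through the stationary density $\pi_R(r;\Ftrnd)$ and its normalizing constant. The key is a uniform tail estimate: since $g$ is bounded (Assumption~\ref{a:g-qual}), $G_-(r;\Ftrnd)$ tends to nonzero constants of the correct sign as $r \to \pm\infty$ at a rate uniform over $\Ftrnd$ in any bounded set, so the potential grows at least linearly and $\pi_R(\cdot;\Ftrnd)$ admits exponential tail bounds uniform in $\Ftrnd$ near $0$. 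This integrable envelope, together with pointwise continuity of the integrand and of the normalization, permits passage to the limit by dominated convergence and gives continuity of $D$.

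Combining the two steps, $D$ is continuous with $D(0) > 0$, so there is a positive $\Ftrnd_c(s)$ with $D(\Ftrnd) > 0$ on $[0,\Ftrnd_c(s))$; this is the regime where the comparison is genuinely nontrivial, since for $\Ftrnd > 0$ the integrand in \eqref{eq:g-two-slower-nf} is actually positive near $r=0$ (there $\frac12 G_+(0;\Ftrnd) = g(s\Ftrnd/2) > g(s\Ftrnd)$) and the conclusion relies on the weighting by $\pi_R$. To cover assisting forces $\Ftrnd \le 0$, I would use a pointwise Jensen-type bound: whenever the arguments $s\Ftrnd/2 \pm sr/2$ lie in the concave-down region $(-\infty, f_*]$ guaranteed by the concavity property~(ii) of Assumption~\ref{a:g-qual}, concavity gives $\frac12 G_+(r;\Ftrnd) \le g(s\Ftrnd/2) \le g(s\Ftrnd)$ (the last step since $g$ is decreasing and $s\Ftrnd/2 \ge s\Ftrnd$ for $\Ftrnd \le 0$), while the residual contribution of large $r$ is controlled by the same exponential tail bound; this secures $D(\Ftrnd) > 0$ for all $\Ftrnd < \Ftrnd_c(s)$.
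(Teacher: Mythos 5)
Your proposal follows essentially the same route as the paper's proof: establish $\vnf{2}{0} < \vnf{1}{0}$ by integrating the strict pointwise bound from the strong-concavity property (iii) of Assumption~\ref{a:g-qual} against the symmetric density $\pi_R(\cdot\,;0)$, then extend to a neighborhood of zero force by continuity of $\Ftrnd \mapsto g(s\Ftrnd) - \vnf{2}{\Ftrnd}$, with your dominated-convergence and uniform tail estimates merely filling in details the paper leaves implicit. The only divergence is your closing attempt to cover all assisting forces $\Ftrnd \le 0$, which the paper's proof does not address and which is not fully justified as written --- the assertion that the large-$|r|$ residual ``is controlled by the same exponential tail bound'' would still require showing the tail contribution is dominated by the bulk concavity gain, which can degenerate as $\Ftrnd \to -\infty$ --- but this goes beyond what the paper itself establishes.
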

\begin{proof}
	In light of \eqref{eq:velocity-one-motor}, the nondimensionalized velocity of a system with one motor is given by $\vnf{1}{\Ftrnd} = g(s \Ftrnd)$. By  \eqref{eq:vel-two-nf} and the definition of $G$, the inequality \eqref{eq:low-force-nf} holds whenever
	\begin{equation} \label{eq:integrand-inequality}
		\int_\rbb \frac{1}{2} \big(g(s(r + \Ftrnd)/2) + g(s(-r + \Ftrnd)/2)\big) \difd \pi_{R}(r;\Ftrnd) < g(s\Ftrnd).
	\end{equation}
As discussed earlier, the inequality \eqref{eq:g-two-slower-nf} does not hold for all $r$, so we must establish the inequality for $\Ftrnd = 0$ and then rely on continuity of the functional with respect to $\Ftrnd$. 
	Indeed, we employ Assumption \ref{a:g-qual} \eqref{a:g-strong} to find that
	\begin{align*}
		\int_\rbb \frac{1}{2}(g(sr/2) + g(-sr/2)) \difd \pi_R(r;0) < \int_{0}^\infty g(0) \difd \pi_R(r;0) = g(0).
	\end{align*}
 	To extend the inequality to an open neighborhood of the origin, we note that $g(s\Ftrnd) - \vnf{2}{\Ftrnd}$ is continuous in $\Ftrnd$ and so must remain positive on some open interval containing the origin.		
\end{proof}

At large applied forces, the appropriate comparison is between the stall force of two motors versus twice the stall force of one motor. First it is worth recalling that in nondimensional terms, the stall force is $\Ftrnd = 1/s$ (see discussion in Section \ref{sec:nondim}).  Therefore we compute the velocity for $\Ftrnd = 2/s$ and demonstrate that it is positive. Indeed at a separation $r$, the velocity of the system satisfies 
\begin{equation*}
	\frac{1}{2}(g(s(r + \Ftrnd)/2) + g(s(-r + \Ftrnd)/2)) = \frac{1}{2} (g(1+sr/2) + g(1-sr/2)) > g(1)
\end{equation*}
where $g(1) = 0$ by hypothesis.

The intuition here is that while the leading motor is actually moving backward because it is experiencing a force greater than its stall force, the trailing motor is still moving forward, and doing so with a magnitude greater than that of the leading motor.  The mean of the two velocities is therefore positive and the system has an overall positive drift. In other words, it takes more than twice the stall force of a single motor to stall the two-motor system. 

\begin{prop}[Superadditive stall forces] 
	\label{thm:superadd-nf}
	Suppose that the force-velocity function $g$ satisfies Assumption \ref{a:g-qual} and that the stallibility satisfies $s < 1$. Let $\Ftrnd_1^*$ and $\Ftrnd_2^*$ denote the minimum nondimensionalized force necessary to result in an average velocity of 0 for a cargo attached to one or two motors, respectively:
	\begin{equation*}
	\Ftrnd_j^* \equiv \min\{\Ftrnd:  \vnf{j}{\Ftrnd} \leq 0\}.
	\end{equation*}
	Then
	\begin{equation} \label{eq:superadditive-stall}
		\Ftrnd_2^* > 2\Ftrnd_1^*
	\end{equation}
\end{prop}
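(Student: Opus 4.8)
The plan is to turn the comparison $\Ftrnd_2^* > 2\Ftrnd_1^*$ into a \emph{uniform} positivity statement for the two-motor velocity on an interval. First I would pin down the single-motor stall force. By the one-motor formula used in the proof of Proposition \ref{thm:qual-nf}, $\vnf{1}{\Ftrnd} = g(s\Ftrnd)$, and since $g$ is strictly decreasing (Assumption \ref{a:g-qual}(i)) with $g(1)=0$, we have $g(s\Ftrnd)\le 0 \iff \Ftrnd \ge 1/s$; hence $\{\Ftrnd:\vnf{1}{\Ftrnd}\le 0\}=[1/s,\infty)$, so $\Ftrnd_1^* = 1/s$ and $2\Ftrnd_1^* = 2/s$. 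It then suffices to show $\vnf{2}{\Ftrnd} > 0$ for \emph{every} $\Ftrnd \in [0,2/s]$: because $\Ftrnd_2^*$ is by definition the smallest force at which $\vnf{2}$ becomes nonpositive, such uniform positivity immediately yields $\Ftrnd_2^* > 2/s = 2\Ftrnd_1^*$.

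The heart of the argument is a pointwise bound on the integrand in \eqref{eq:vel-two-nf}. Fix $\Ftrnd \in [0,2/s]$ and set $a = s\Ftrnd/2 \in [0,1]$ and $B = sr/2$, so that the integrand is $\tfrac12 G_+(r;\Ftrnd) = \tfrac12\big(g(a+B)+g(a-B)\big)=:\psi_B(a)$. Since $g$ is nonincreasing, both summands are nonincreasing in $a$, so $\psi_B$ is nonincreasing in $a$; because $a \le 1$ this gives the pointwise lower bound $\psi_B(a) \ge \psi_B(1) = \tfrac12\big(g(1+B)+g(1-B)\big) = \tfrac12\,\tilde\gsym(B)$. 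By the strong concavity property (Assumption \ref{a:g-qual}\eqref{a:g-strong}), $\tilde\gsym$ is even and increasing in its positive argument, whence $\tilde\gsym(B) > \tilde\gsym(0) = 2g(1) = 0$ for all $B \neq 0$. Since $\pi_R(\cdot;\Ftrnd)$ is a strictly positive probability density (it is an exponential, see \eqref{eq:defn-pi}), integrating the bound gives
\begin{equation*}
  \vnf{2}{\Ftrnd} = \int_\rbb \psi_{sr/2}(s\Ftrnd/2)\,\pi_R(r;\Ftrnd)\,\difd r \;\ge\; \int_\rbb \tfrac12\,\tilde\gsym(sr/2)\,\pi_R(r;\Ftrnd)\,\difd r \;>\; 0,
\end{equation*}
the final inequality being strict because the integrand is nonnegative and vanishes only at the single point $r=0$. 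This establishes $\vnf{2}{\Ftrnd}>0$ throughout $[0,2/s]$, and hence the proposition.

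I would flag two points, the first of which I regard as the main obstacle. It is \emph{not} enough to evaluate $\vnf{2}$ at the single value $\Ftrnd = 2/s$ (as the heuristic discussion preceding the statement does): since $\Ftrnd_2^*$ is the \emph{first} zero of $\vnf{2}$, one must exclude an earlier sign change, and the uniform bound $\psi_B(a)\ge\psi_B(1)$ is precisely what accomplishes this without having to prove monotonicity of $\Ftrnd \mapsto \vnf{2}{\Ftrnd}$ directly — which is awkward because the stationary law $\pi_R(\cdot;\Ftrnd)$ itself varies with $\Ftrnd$, so differentiating under the integral produces an indefinite covariance term. Second, the strictness at the endpoint $\Ftrnd = 2/s$ genuinely relies on the configurational noise being present: if the separation process were deterministic ($\rho = 0$, so $\pi_R \to \delta_0$) the two motors would sit exactly at stall and $\vnf{2}{2/s}$ would equal $g(1)=0$; it is the spreading of $\pi_R$ onto $r \neq 0$, together with the convexity of $g$ near stall encoded in \eqref{a:g-strong}, that lets the trailing motor's forward contribution overcome the leading motor's backward motion and tips the velocity strictly positive. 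The hypothesis $s<1$ I would use only to keep the compared forces (up to $\Ftrnd = 2/s$) and the associated single-motor stall $1/s$ within the operating regime for which Assumption \ref{a:g-qual} is posed; the mechanism of the inequality itself rests entirely on the monotonicity in (i), the strong concavity in \eqref{a:g-strong}, and the positivity of $\pi_R$.
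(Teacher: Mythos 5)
Your proof is correct, but it takes a genuinely different — and in fact tighter — route than the paper's. The paper's proof compares $\vnf{2}{\Ftrnd}$ to $\vnf{1}{\Ftrnd/2}=g(s\Ftrnd/2)$ by asserting a midpoint-convexity inequality $\tfrac12\bigl(g(s\Ftrnd/2+sr/2)+g(s\Ftrnd/2-sr/2)\bigr)>g(s\Ftrnd/2)$ for all $\Ftrnd>2\theta_1/s$, where $\theta_1$ is a threshold it attributes to Assumption \ref{a:g-qual}\eqref{a:g-strong} but which never actually appears in the stated assumption (the strong-concavity hypothesis only provides this inequality at the single center $f=1$, via $\tilde\gsym$); it then uses $g(s\Ftrnd/2)\geq 0$ for $\Ftrnd\leq 2/s$ and invokes $s<1$ to locate $2\theta_1/s$ below $\Ftrnd_1^*$. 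That argument leaves the interval $[0,2\theta_1/s]$ uncovered, which matters because $\Ftrnd_2^*$ is defined as a \emph{minimum} — exactly the obstacle you flag. Your device of first using monotonicity of $g$ to bound the integrand from below by its value at $a=1$, i.e.\ $\tfrac12\tilde\gsym(sr/2)$, and only then invoking strong concavity \emph{at the one point where the assumption supplies it}, yields uniform strict positivity of $\vnf{2}{\Ftrnd}$ on all of $(-\infty,2/s]$. This buys three things: it closes the low-force gap, it dispenses with the undefined threshold $\theta_1$, and it shows the hypothesis $s<1$ is not actually needed for the conclusion. Your closing remarks — that the strictness at $\Ftrnd=2/s$ hinges on $\pi_R$ charging $r\neq 0$ — match the paper's own interpretive discussion following the proposition.
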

\begin{proof}
	For a given $\Ftrnd$, we recall $\vnf{1}{\Ftrnd} = g(s\Ftrnd)$. By Assumption 1, we can directly observe that $\Ftrnd_1^* = 1/s$. To show \eqref{eq:superadditive-stall} recall the definition of $\theta_1$ from Assumption \ref{a:g-qual} \eqref{a:g-strong}. It follows that for all $\Ftrnd > 2 \theta_1/s$, 
	\begin{align*}
		\vnf{2}{\Ftrnd} &= \int_\rbb \frac{1}{2} \big(g(s(r + \Ftrnd)/2) + g(s(-r + \Ftrnd)/2)\big) \difd \pi_{R}(r;\Ftrnd) \\
		&> \int_\rbb g(s\Ftrnd/2) \difd \pi_{R}(r;\Ftrnd) = g(s\Ftrnd/2) = \vnf{1}{\Ftrnd/2}
	\end{align*} 
	In particular, since $s < 1$ we have $\theta_1 < 1 < 1/s = \Ftrnd_1^*$. Therefore for all $\Ftrnd \in (2\theta_1/s, \Ftrnd_1^*]$, we have $\vnf{2}{\Ftrnd} > \vnf{1}{\Ftrnd} \geq 0$ implying \eqref{eq:superadditive-stall}.
\end{proof}

Because the the strict superadditive inequality relies on averaging against the stationary distribution $\pi_R$ of the motor configurations, we see that stochasticity in motor position is crucial to this result.  Without these fluctuations, the motors would simply stall at $\Ftrnd = 2/s$ because each motors' share of the total force would be exactly their stall force.  The random component of the stepping makes it possible for the motors to get out of a stalled configuration.  Once separated, the differentiated responses of the leading and trailing motors makes progress possible for the motor-cargo system.

The condition $ s < 1 $ means that the typical spring force is less than the stall force of the motor, so that the fluctuations in the cargo by themselves do not typically subject the motors to stalling forces.  
We remark that similar conclusions relating the properties of the two motor and cargo system to the convexity properties of the single-motor force-velocity curve, also for the case of non-fluctuating cargo, were developed by \cite{Wang:2009} in the context of an exact solution of a model in which the motors were represented as random walkers on a lattice, coupled by their attachment to the cargo.  Their model did not include cargo fluctuations, but in the next section we will extend our analysis to include these.

Before moving on, we note a contrast to models that do incorporate motor unbinding.  As discussed earlier, part of our objective is to determine which properties of multi-motor transport can be directly attributed to distribution of forces when the spatial configuration of the motors is accounted for. It has been shown, however, that in certain regimes motor detachment is the dominant feature.  

For example, Kunwar et al \cite{kunwar2008stepping,Kunwar:2010} observed that when the cargo is subjected to a significant force, the majority of this force is borne by the leading motor, which is therefore more likely to detach than the trailing motor.  When detachment occurs, the system snaps back to the location of the trailing motor.  Often this other motor, now bearing all the force of the optical trap, will detach as well. (This detachment cascade is crucial to the tug-of-war dynamics described by M\"uller, Klumpp and Lipowsky \cite{Muller:2008pnas}.)  If the trailing motor does not detach and it is assumed that the leading motor will reattach not too far from the position of the trailing head, then the system will proceed after having lost some ground. This sequence of events is common enough that Kunwar et al report that the stall force of a two motor system is \emph{subadditive} when compared to the stall force of a single motor system. By contrast, our non-detachment model predicts a \emph{superadditive} stall force. This is an important distinction to make because tug-of-war systems may spend substantial periods of time in a ``stalemate'' between forward and backward motors that are stalled, but are not exerting sufficient force to induce detachment \cite{Muller:2008pnas}.  

\subsection{Analysis including cargo dynamics}
\label{sec:analysis-two-motors-fluct}

Having developed some of the key two-motor results for the simplified setting neglecting cargo fluctuations, we now proceed to extend them to include the cargo fluctuations.  To write a version of Theorem \ref{thm:qual-nf} that includes thermal fluctuations of the cargo, we redefine the function $G(r)$, which represents the instantaneous drift of a motor when the other is a signed distance $r$ away. Rather than being written for a fixed cargo position, $G$ now includes an average over the cargo position.
We redefine the function $\gtwoavg(r)$
as follows. From \eqref{eq:g-avg} and \eqref{eq:cargo-distr},
\begin{align}
	\gtwoavg(r) &\equiv \gavg_1(\vec{x}; 0) = \gavg_2 (\vec{x};0) = \gaussbigbig{\frac{-rs}{2}}{\frac{s^2}{4}}{g}. \label{eq:gwithflucs}	
\end{align}

\begin{figure}[ht]
\begin{center}
\includegraphics[scale=.35]{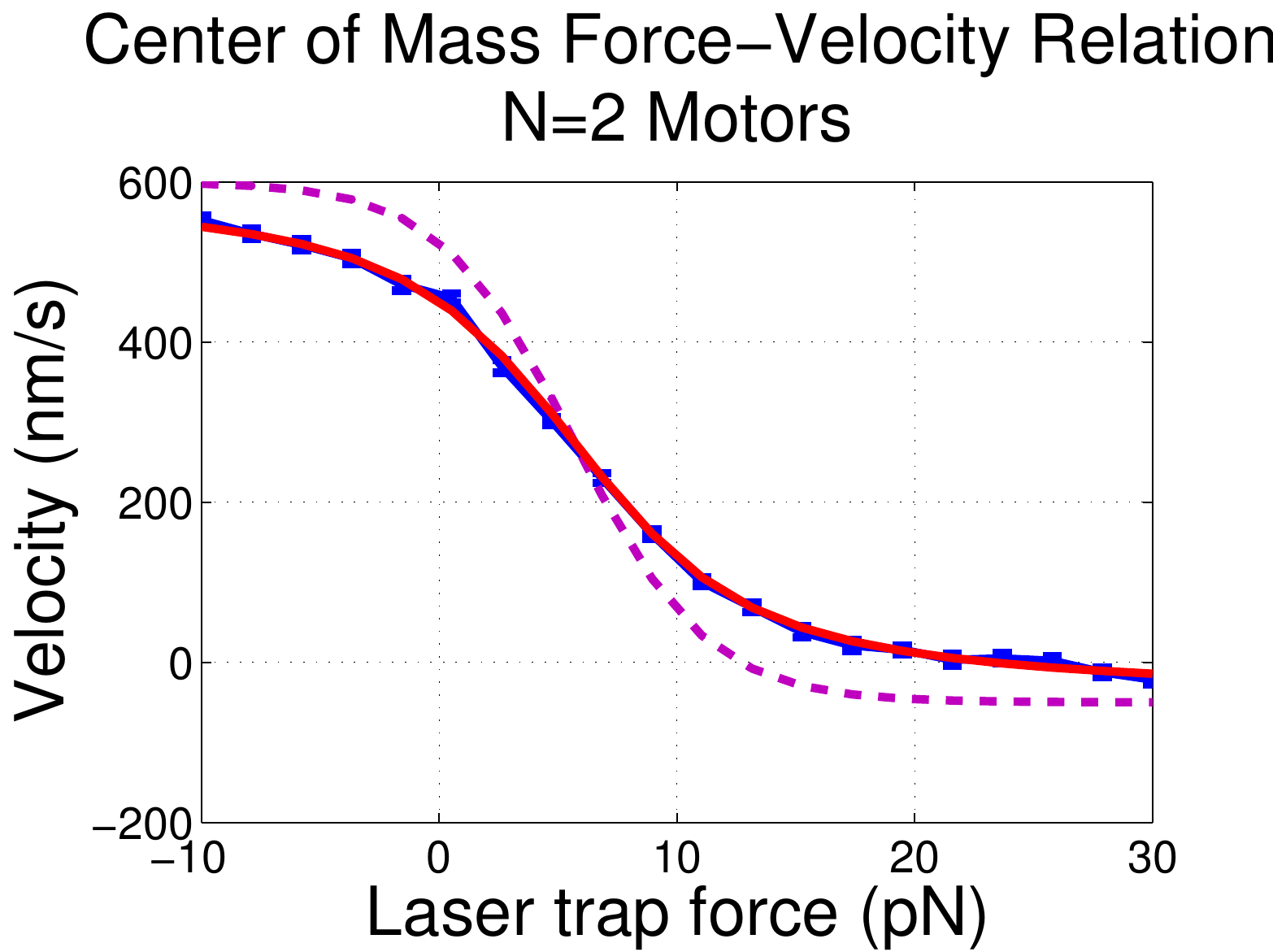}
\includegraphics[scale=.37]{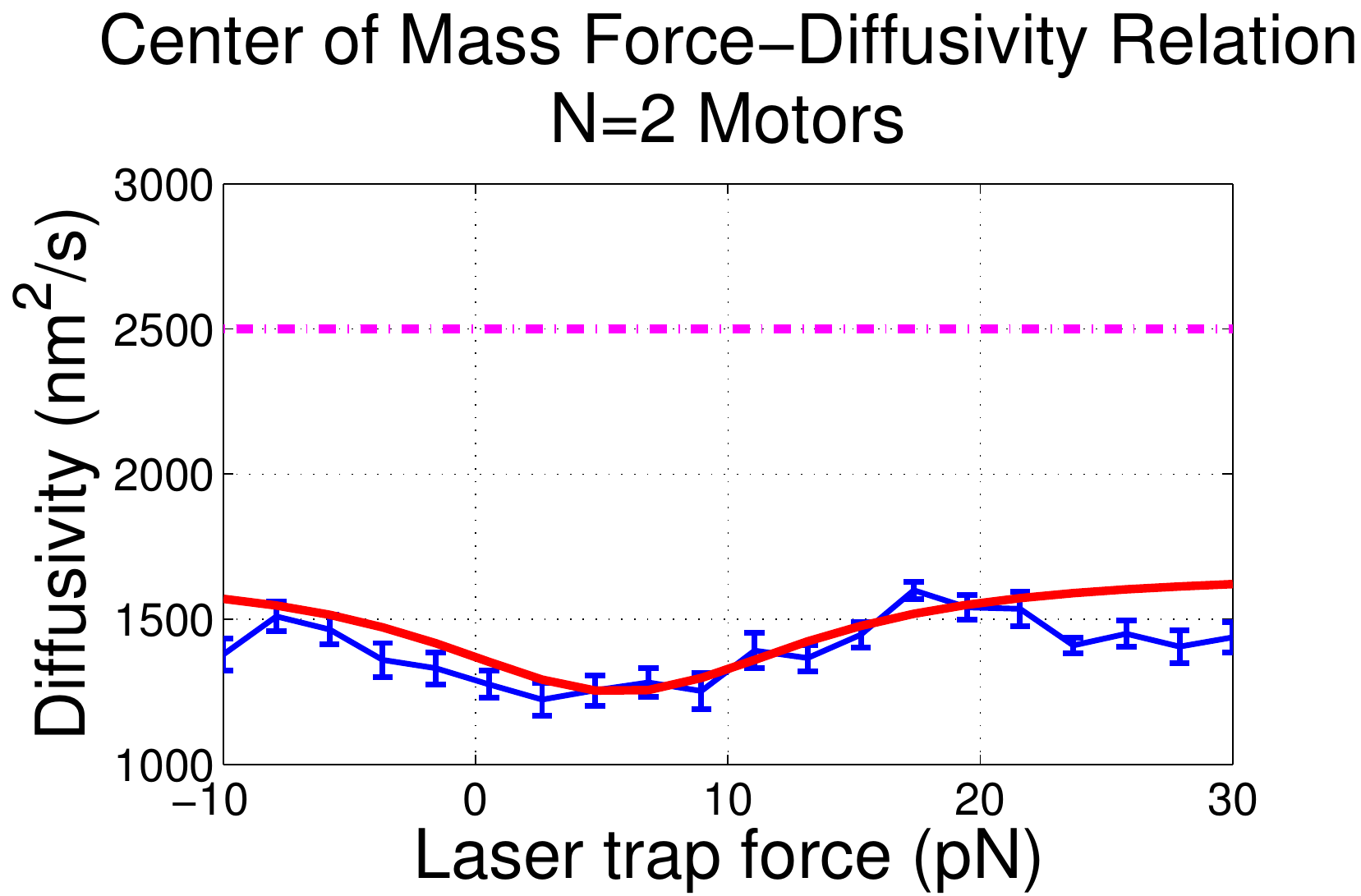}
\caption{\label{fig:cmforcevelocity} Comparison of our stochastic averaging results with simulation of the effective force-velocity (left) and force-diffusivity (right) relationships of two cooperative motors attached to a common cargo.  The direct stochastic simulations of the model equations \eqref{eq:motorcargosde} appear in blue with error bars denoting sample standard deviation. The stochastic averaging results, appearing as red solid curves, are computed from Eqs.~\eqref{eq:vel-two-nf} and \eqref{eq:diff-two-nf} for the velocity and diffusivity respectively, with $G$ defined by \eqref{eq:gwithflucs}. The force-velocity curves are compared to the prediction of the force-balance theory ($v g(\theta/2F_*)$, magenta and dashed).  The force diffusivity curves are compared to the diffusivity of the cargo when attached to  one motor ($\sigma^2/2$, magenta and dashed).
 Parameter values are listed in Table~\ref{tab:const}, and we take the model (\ref{E:gbar-form}) for the instantaneous force-velocity relationship for a single motor.}
\end{center}
\end{figure}

The calculation for  the asymptotic average velocity $\vf{2}{\Ftrnd}$ and diffusivity $\df{2}{\Ftrnd}$ proceeds exactly as in Section \ref{sec:nocargofluc}, with the effects of the cargo fluctuations appearing only in the redefinition of $ \gtwoavg $.
The accuracy of our stochastic averaging formula (Eqs.~(\ref{eq:vel-two-nf}) and (\ref{eq:gwithflucs})) for the average velocity of two cooperative motors bound to a cargo is demonstrated in Figure~\ref{fig:cmforcevelocity} by its excellent agreement with direct numerical simulations of the underlying explicit stochastic dynamical model for the motor and cargo positions (Eq.~\ref{eq:motorcargosde}).  We also contrast our theoretical force-velocity curve with what would be obtained by a simpler force-balance theory, in which the motors are always assumed to bear half the force applied to the cargo, 
taking into account neither cargo fluctuations nor the varying spatial configuration of the two motors~\citep{Muller:2008pnas,Muller:2008jstatphys}.  We see that incorporating stochastic fluctuations of the cargo and relative motor positions does create a substantial change in the theoretical prediction of the effective transport of the motor-cargo system, even in the simplified context of two identical cooperative motors without binding and unbinding dynamics (also noted for a discrete stepping model without cargo fluctuations in~\citet{Wang:2009})
 The effective diffusivity, seen in the panel on the right in Figure~\ref{fig:cmforcevelocity}, for the cargo attached to two motors is slightly more than half  the value $ \frac{\sigma^2}{2} $ of the effective diffusivity when attached to a single motor (\ref{eq:one-motor-diffusivity}).  The effective diffusivity increases somewhat as the applied force increases in either direction.

\begin{figure}[ht]
\begin{center}
\includegraphics[scale=.35]{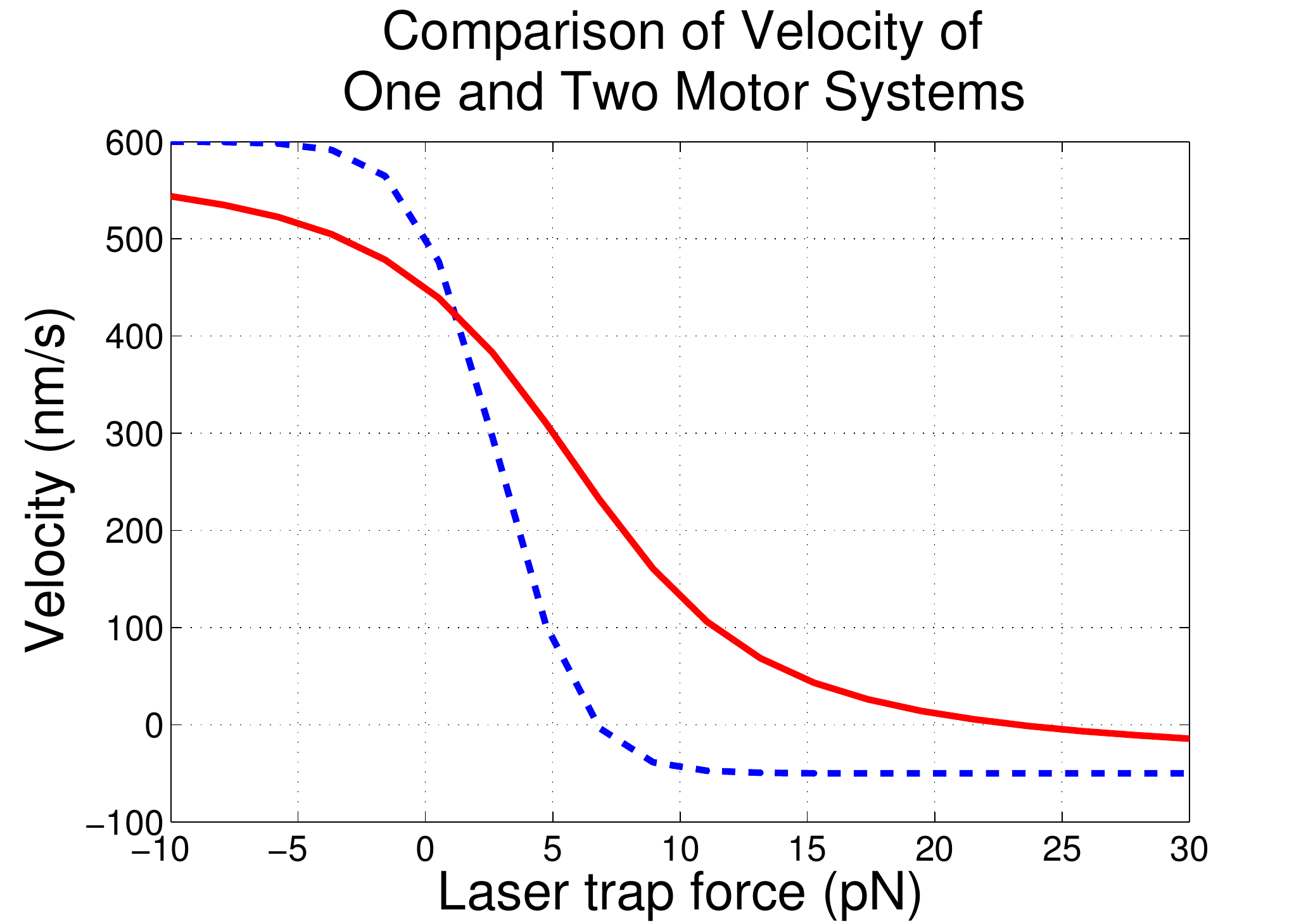}
\caption{\label{fig:onevstwo} Effective force-velocity curves for a single motor attached to a cargo (blue dashed, from Eq.~(\ref{eq:velocity-one-motor})) and two cooperative motors attached to a common cargo (red solid, from 
(Eqs.~(\ref{eq:vel-two-nf} and (\ref{eq:gwithflucs}) ).  Parameter values are listed in Table~\ref{tab:const}, and we take the model (\ref{E:gbar-form}) for the  instantaneous force-velocity relationship for a single motor.}
\end{center}
\end{figure}

Figure~\ref{fig:onevstwo} compares the effective force-velocity relationship of the cargo when bound to one or two motors.  
In particular, we observe that the system with two motors is moving slower at low load forces than the single-motor-cargo system.   On the other hand, at large applied forces, the two-motor-cargo system moves faster, and we see that for our parameter choices, the stall force for the two-motor-cargo system is about three times that of the single-motor-cargo system.  
These qualitative features have been previously noted in various models~\cite{kunwar2008stepping,Wang:2009,jamison:2010}, usually through direct numerical simulations.  Our analytical formulation allows us to provide more mathematical support to these observations.

We begin by showing that $G$ satisfies a concavity result similar to $g$.
\begin{lemma} \label{lem:G-concavity}
	Let $g$ satisfy Assumption \ref{a:g-qual}. Then there exists an $s_* > 0$ such that for any $s \in (0,s_*)$ we have that for all $r > 0$
	\begin{equation} \label{eq:G-concave}
		\frac{1}{2}\big(\gtwoavg(r) + \gtwoavg(-r)\big) < G(0).
	\end{equation}
\end{lemma}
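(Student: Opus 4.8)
The plan is to reduce the statement to a monotonicity property of a Gaussian-smoothed version of $g$, and then to show that the requisite monotonicity is inherited from Assumption \ref{a:g-qual}(iii) under Gaussian convolution. First I would unwind the definition \eqref{eq:gwithflucs}: writing $\tilde g(x) := \gaussbigbig{x}{s^2/4}{g} = \E{g(x + W)}$ with $W \sim N(0, s^2/4)$, one has $\gtwoavg(r) = \tilde g(-rs/2)$, so that with $a := rs/2 > 0$ the claimed inequality \eqref{eq:G-concave} is exactly
\[
\tfrac{1}{2}\big(\tilde g(a) + \tilde g(-a)\big) < \tilde g(0), \qquad a > 0.
\]
Equivalently, setting $\tilde\eta(a) := \tilde g(a) + \tilde g(-a)$, I want $\tilde\eta(a) < \tilde\eta(0)$ for every $a > 0$, for which it suffices to prove that $\tilde\eta$ is strictly decreasing on $(0,\infty)$.

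The unsmoothed analogue, $\tfrac12\big(g(a)+g(-a)\big) < g(0)$, is precisely Assumption \ref{a:g-qual}(iii) (strict monotonicity of $\gsym(f) = g(f) + g(-f)$ on $f>0$), so the heart of the matter is to show that this survives Gaussian smoothing. Since $g$ is smooth with bounded derivatives, I may differentiate under the expectation to get $\tilde g'(x) = \E{g'(x+W)}$, and then, using $W \stackrel{d}{=} -W$,
\[
\tilde\eta'(a) = \tilde g'(a) - \tilde g'(-a) = \E{g'(a+W) - g'(-(a+W))} = \E{\gsym'(a + W)},
\]
where $\gsym'(\xi) = g'(\xi) - g'(-\xi)$ is odd and, by Assumption \ref{a:g-qual}(iii), satisfies $\gsym'(\xi) \le 0$ for $\xi > 0$ (hence $\ge 0$ for $\xi < 0$). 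Writing this expectation as $\int \gsym'(u)\,\phi_\nu(u-a)\,\difd u$ with $\nu = s^2/4$ and folding the negative half-line onto the positive one via the oddness of $\gsym'$ (and evenness of $\phi_\nu$) gives
\[
\tilde\eta'(a) = \int_0^\infty \gsym'(u)\,\big[\phi_\nu(u-a) - \phi_\nu(u+a)\big]\,\difd u.
\]
The bracketed difference-of-Gaussians kernel is strictly positive for $u,a>0$ because $|u-a| < |u+a|$ and $\phi_\nu$ is strictly decreasing in $|\cdot|$; since $\gsym' \le 0$ on $(0,\infty)$, the integrand is $\le 0$, and it is strictly negative on a set of positive measure because $\gsym$ is strictly decreasing. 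Hence $\tilde\eta'(a) < 0$ for all $a>0$, which yields \eqref{eq:G-concave}.

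I expect the main obstacle to be the bookkeeping in the folding step — getting the odd symmetry of $\gsym'$ and the sign of the kernel $\phi_\nu(u-a) - \phi_\nu(u+a)$ to line up correctly — together with justifying differentiation under the integral (which follows from boundedness of $g'$ and $g''$ in Assumption \ref{a:g-qual}) and upgrading the weak inequality to a strict one. I note that this argument in fact uses no smallness of $s$: the kernel positivity holds for every $\nu>0$, so the conclusion holds for all $s>0$ and a fortiori for $s \in (0,s_*)$ with any $s_*$.

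If one instead prefers a perturbative route consistent with the stated smallness hypothesis, then for small $a$ a Taylor expansion gives $\tilde\eta(a) - \tilde\eta(0) = a^2\,\E{g''(W)} + O(a^4)$ with $\E{g''(W)} \to g''(0) < 0$ as $s \to 0$ (using the concavity $g''(0)<0$ from Assumption \ref{a:g-qual}(ii)); this handles small separations transparently but does not by itself reach all $r>0$, which is why I favor the global monotonicity argument above.
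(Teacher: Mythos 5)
Your proof is correct, but it follows a genuinely different route from the paper's. You reduce the claim to showing that $\tilde\gsym := \gsym * \phi_\nu$ (the Gaussian smoothing of $\gsym(y)=g(y)+g(-y)$ at variance $\nu=s^2/4$) remains strictly decreasing on $(0,\infty)$, and you prove this by the folding identity $\tilde\gsym'(a)=\int_0^\infty \gsym'(u)\bigl[\phi_\nu(u-a)-\phi_\nu(u+a)\bigr]\,\difd u$ together with the sign of the difference-of-Gaussians kernel; this is essentially the classical fact that convolution preserves symmetric unimodality, and it uses only Assumption \ref{a:g-qual}\eqref{a:g-strong} (read, as the paper itself does elsewhere, as strict monotonicity of $\gsym$). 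The paper instead bounds the quantity $\gtwoavg(0)-\tfrac12(\gtwoavg(r)+\gtwoavg(-r))$ from below directly, splitting into the cases $rs<f_*$ and $rs>f_*$: in the first it uses the uniform concavity of $g$ on $[-f_*,f_*]$ and a second-order Taylor expansion to get a gap of order $r^2s^2$, and in the second it uses the monotonicity of $\gsym$ to get a uniform gap $K$; both cases control the contribution from outside $[0,f_*/2]$ by Gaussian tail probabilities, which is precisely why the paper needs $s<s_*$. The trade-off is clear: the paper's argument yields explicit quantitative lower bounds on the gap (useful if one wanted rates), while yours is shorter, needs fewer hypotheses (no appeal to the concavity condition (ii)), and — as you correctly observe — eliminates the smallness restriction on $s$ entirely, so it actually proves a stronger statement than the lemma asserts. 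The only points to make sure are airtight are the ones you already flag: differentiation under the expectation (justified by the bounded derivatives in Assumption \ref{a:g-qual}) and the upgrade to a strict inequality, which follows because strict decrease of $\gsym$ forces $\gsym'<0$ on a set of positive measure in every subinterval of $(0,\infty)$ while the kernel $\phi_\nu(u-a)-\phi_\nu(u+a)$ is strictly positive for $u,a>0$.
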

\begin{proof}
	Note that
	\begin{align}
	\nonumber	& \gtwoavg(0) - \frac{1}{2}\big(\gtwoavg(r) + \gtwoavg(-r)\big) = \gaussbigbig{0}{\frac{s^2}{4}}{g} - \frac{1}{2}\big(\gaussbigbig{\frac{-rs}{2}}{\frac{s^2}{4}}{g} + \gaussbigbig{\frac{rs}{2}}{\frac{s^2}{4}}{g}\big) \\
	\nonumber & \qquad \qquad = \sqrt{\frac{2}{\pi s^2}} \int_{\rbb} \left[g(y) - \frac{1}{2} \left(g\Big(y + \frac{rs}{2}\Big) + g\Big(y - \frac{rs}{2}\Big)\right)\right] \exp\Big(\frac{-2 y^2}{s^2} \Big) dy \\
	& \qquad \qquad = \sqrt{\frac{2}{\pi s^2}} \int_{0}^\infty \left[\eta(y) - \frac{1}{2} \left(\eta\Big(y + \frac{rs}{2}\Big) + \eta\Big(y-\frac{rs}{2}\Big)\right)\right] \exp\Big(\frac{-2 y^2}{s^2} \Big) dy \label{eq:eta-integral}
	\end{align} 
where we recall the function $\eta(y) = \left(g(y) + g(-y)\right)$ from Assumption 1, which by hypothesis is decreasing for positive $y$ (and therefore by even symmetry as a function of $ |y| $).  

For each fixed and suitably small $s$, we produce a lower bound for the right-hand side in two cases:  1) $rs < f_*$ and 2) $rs > f_*$, where $f_*$ is the constant from Assumption~\ref{a:g-qual} such that $g''(y) < 0$ for all $y < f_*$. 

To address the first case, we rely on the concavity properties of  $g$ to deduce there exists an $m > 0$ such that for $y \in [-f_*,f_*]$, $\eta''(y) < -m$.  Since $y \in [0,f_*/2]$ implies for this first case that $ -f_* <  y-rs/2 < y+rs/2  <f_*$, we obtain after a second order Taylor expansion:
\begin{equation*}
	\eta(y) - \frac{1}{2}\left[\eta\left(y-\frac{rs}{2}\right)+\eta\left(y+\frac{rs}{2}\right)\right] > \frac{r^2s^2}{8} m \text{ for } y \in [0,f_*/2].
\end{equation*}
For all other $y$, we use the upper bound 	$|\eta(y) - \frac{1}{2}(\eta(y-rs/2)+\eta(y+rs/2))| < \frac{r^2s^2}{8} |g''|_\infty$, where $|\cdot|_\infty$ is the sup norm over $\mathbb{R}$.

Splitting the integral in \eqref{eq:eta-integral} into two parts $I = I_1 + I_2$ with the integrals $I_1$ and $I_2$ being taken over $[0,f_*/2)$ and $[f_*/2,\infty)$ we have the estimate
\begin{equation*}
	\text{RHS \eqref{eq:eta-integral}} > \frac{r^2s^2}{8} \left[\left(\frac{1}{2} - \P\left\{Z_s > \frac{f_*}{2}\right\}\right)m -  \P\left\{ Z_s  > \frac{f_*}{2}\right\} |g''|_\infty \right]
\end{equation*}
where $Z_s$ is a normal random variable with mean 0 and variance $s^2/4$.  We can therefore choose an $s_1 > 0$ (independent of $ r $) such that for all $s < s_1$, the right hand-side is positive.

The Taylor approximation is not useful when $r$ is not small, and so in the second case $rs > f_*$, we seek a uniform bound. Because $\eta$ is a decreasing function of $ |y| $, we see that for each fixed $y $,  $\eta(y-a) + \eta(y+a)$ is a decreasing function of $ a $ for $ a > |y| $.  Consequently, whenever $ 0 \leq y \leq f_*/2 $ and $ rs > f_* $, we can deduce that $\eta(y) - (\eta(y-rs/2) + \eta(y+rs/2))/2 \geq \eta(y) - (\eta(y-f_*) + \eta(y+f_*))/2 \geq K$ for some constant $ K > 0 $ independent of $ r $.
	
Splitting the integral	in \eqref{eq:eta-integral} as in the first case, and combining the estimates from the previous paragraph with the global bound $\eta(y) - \frac{1}{2}(\eta(y-rs/2) + \eta(y+rs/2)) < 4 |g|_\infty$, we obtain for the second case ($rs > f_*$):
	\begin{equation*}
		\text{RHS \eqref{eq:eta-integral}} > \left(\frac{1}{2} - P\Big\{Z_s > \frac{f_*}{2}\Big\}\right) K - 4 \P\Big\{Z_s > \frac{f_*}{2}\Big\} |g|_\infty 
	\end{equation*}
	Once again, there exists an $s_2 > 0$ (independent of $ r $) such that for all $s < s_2$ the right-hand side is positive.  Taking $s_* = \min(s_1,s_2)$ completes the proof.
\end{proof}

With this lemma we can now prove the following.
\begin{theorem} \label{thm:qual}
	Let $g$ satisfy Assumption \ref{a:g-qual}. Then there exist $s_*,\theta_* > 0 $ such that for any $s < s_*$ and $ |\Ftrnd| < \theta_* $, we have  $\vf{2}{\Ftrnd} < \vf{1}{\Ftrnd}$.
\end{theorem}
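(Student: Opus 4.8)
The plan is to follow the template of Proposition~\ref{thm:qual-nf}: establish the strict inequality at zero load and then propagate it to a neighborhood $|\Ftrnd| < \theta_*$ using continuity of $\Ftrnd \mapsto \vf{2}{\Ftrnd} - \vf{1}{\Ftrnd}$ (both velocities depend smoothly on $\Ftrnd$ through \eqref{eq:one-motor-guass-avg}, \eqref{eq:vel-two-nf}, and the stationary density $\pi_R(\cdot;\Ftrnd)$ of \eqref{eq:defn-pi}). The substantive work is therefore the case $\Ftrnd = 0$, and here one must confront the feature that distinguishes this theorem from Proposition~\ref{thm:qual-nf}: a single motor permits cargo variance $1/2$ while the two-motor cargo is pinned to variance $1/4$ (see \eqref{eq:cargo-distr} with $\Nmot = 1,2$), so $\vf{1}{0} = \gaussbigbig{0}{s^2/2}{g}$ whereas the building block of $\vf{2}{0}$ is $G(0) = \gaussbigbig{0}{s^2/4}{g}$, and these are \emph{not} equal. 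Consequently Lemma~\ref{lem:G-concavity} yields only $\vf{2}{0} < G(0)$; and since $g$ is concave near the origin, one in fact has $\vf{1}{0} < G(0)$ as well, so the lemma's bound is too weak to separate $\vf{2}{0}$ from $\vf{1}{0}$. This is exactly why the hypothesis restricts to small $s$.

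To bridge the gap I would introduce the cargo-dressed curve $\tilde g(f) := \gaussbigbig{f}{s^2/4}{g}$. Splitting a mean-zero variance-$s^2/2$ Gaussian into the sum of two independent variance-$s^2/4$ Gaussians gives $\vf{1}{0} = \gaussbigbig{0}{s^2/4}{\tilde g} = \E{\tilde g(N)}$ with $N \sim \mathcal{N}(0,s^2/4)$; and since $G(r) = \tilde g(-rs/2)$ by \eqref{eq:gwithflucs}, while $\pi_R(\cdot;0)$ is even (the drift $G_-(\cdot;0)$ is odd), one has $\vf{2}{0} = \E{\tfrac12\big(\tilde g(\tfrac{s}{2}R) + \tilde g(-\tfrac{s}{2}R)\big)}$ with $R \sim \pi_R(\cdot;0)$. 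Thus both velocities are symmetric averages of the \emph{same} function $\tilde g$: the one-motor velocity over the cargo-noise scale $N$, the two-motor velocity over the motor-configuration scale $\tfrac{s}{2}R$. Since $\tilde g$ inherits the downward concavity of $g$ at the origin for small $s$ (as $\tilde g''(0) = \E{g''(N)} \to g''(0) < 0$), the comparison $\vf{2}{0} < \vf{1}{0}$ reduces to showing that the configurational spread strictly exceeds the cargo spread.

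This is the crux, and where I expect the real difficulty. I would read off the curvature of the confining potential for $R$ from \eqref{eq:defn-pi}: near $r = 0$ we have $G_-(r;0) \approx 2G'(0)\,r$ with $G'(0) = -\tfrac{s}{2}\tilde g'(0) \approx \tfrac{s}{2}|g'(0)| > 0$, so $\pi_R(\cdot;0)$ is asymptotically Gaussian with $\mathrm{Var}(R) \sim \rho/(s\,|g'(0)|)$. Hence $\mathrm{Var}(\tfrac{s}{2}R) \sim s\rho/(4|g'(0)|)$ is of order $s$, strictly larger than $\mathrm{Var}(N) = s^2/4$, which is of order $s^2$. A second-order Taylor expansion of $\tilde g$ about $0$ then gives, to leading order, $\vf{1}{0} - \vf{2}{0} \approx -\tfrac12\tilde g''(0)\big(\mathrm{Var}(\tfrac{s}{2}R) - \mathrm{Var}(N)\big) > 0$. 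Making this rigorous is the main obstacle: because $\tilde g$ is only locally concave, the Taylor remainder must be dominated using the exponential concentration of $\tfrac{s}{2}R$ near the origin as $s \to 0$ together with the global bounds on $g$ and its derivatives from Assumption~\ref{a:g-qual} — precisely the two-region split ($|r|s < f_*$ versus $|r|s > f_*$) and the sup-norm estimates already used in the proof of Lemma~\ref{lem:G-concavity}. One must also verify that the $O(s)$ leading gap dominates all $O(s^2)$ corrections (the non-Gaussian correction to $\mathrm{Var}(R)$ and the error in replacing $g''$ by $g''(0)$), which secures the strict inequality at $\Ftrnd = 0$ for all small $s$; a final equicontinuity bound on $\partial_\Ftrnd(\vf{2}{\Ftrnd} - \vf{1}{\Ftrnd})$ then delivers a uniform $\theta_*$.
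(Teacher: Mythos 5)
Your overall template (strict inequality at $\Ftrnd=0$, then continuity in $\Ftrnd$) matches the paper's, but at the zero-force step you depart from it, and your reason for doing so is well founded. The paper's proof runs through exactly the chain you reject: it applies Lemma~\ref{lem:G-concavity} to get $\vf{2}{0}<\gtwoavg(0)$ and then asserts $\gtwoavg(0)<\vf{1}{0}$ by writing $\gtwoavg(0)=\frac{1}{\sqrt{\pi}}\int_0^\infty \gsym(sy/\sqrt{2})e^{-y^2}\,dy$ and $\vf{1}{0}=\frac{1}{\sqrt{\pi}}\int_0^\infty \gsym(sy)e^{-y^2}\,dy$ and invoking Assumption~\ref{a:g-qual}\eqref{a:g-strong}. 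But since $\gsym$ is \emph{decreasing} on $(0,\infty)$ and $sy/\sqrt{2}<sy$, that pointwise comparison gives $\gsym(sy/\sqrt{2})\ge\gsym(sy)$, hence $\gtwoavg(0)\ge\vf{1}{0}$ --- the opposite of what the paper writes (for a locally concave $g$ the smaller-variance Gaussian average is the larger one, e.g.\ $g(f)=1-f^2/2$ gives $\gtwoavg(0)=1-s^2/8>1-s^2/4=\vf{1}{0}$). So your objection that Lemma~\ref{lem:G-concavity} alone cannot separate $\vf{2}{0}$ from $\vf{1}{0}$ identifies a genuine defect in the published argument, not merely an alternative route.

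Your proposed repair is, in outline, the right way to close the gap and is genuinely different from (and stronger than) what the paper does. Writing $\vf{1}{0}=\E{\tilde g(N)}$ with $N\sim\mathcal{N}(0,s^2/4)$ by splitting the cargo variance, and $\vf{2}{0}=\E{\tilde g(sR/2)}$ with $R\sim\pi_R(\cdot\,;0)$ (which is indeed even, since $G_-(\cdot\,;0)$ is odd), reduces the theorem to a comparison of spreads of the same dressed curve $\tilde g$; the linearization $G_-(r;0)\approx -s g'(0) r$ does give $\mathrm{Var}(sR/2)\sim s\rho/(4|g'(0)|)=O(s)$ against $\mathrm{Var}(N)=s^2/4=O(s^2)$, the evenness of $\pi_R$ kills the odd Taylor terms, the fourth-moment corrections are $O(s^2)$, and the mass of $sR/2$ outside $[-f_*,f_*]$ is exponentially small, so the $O(s)$ concavity gap survives for small $s$. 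This makes essential and honest use of the hypothesis $s<s_*$. What keeps it from being a complete proof is that the crux is only sketched: the uniform two-region remainder estimate (which, as you note, can be modeled on the proof of Lemma~\ref{lem:G-concavity}) and the continuity step in $\Ftrnd$ --- where $\pi_R(\cdot\,;\Ftrnd)$, its normalization, and its variance all move with $\Ftrnd$ --- are asserted rather than carried out. Executing those estimates would give a correct proof where the paper's own final inequality does not.
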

\begin{proof}
We proceed as in the proof of Theorem \ref{thm:qual-nf}: proving the property for $\Ftrnd = 0$ and then extending it by continuity in $\Ftrnd$ of an appropriate functional.  Taking the computation of the average velocity from Section \ref{sec:nocargofluc} and applying \eqref{eq:G-concave} yields
\begin{equation*}
	\vf{2}{\Ftrnd} = \frac{1}{2}\int_\rbb (\gtwoavg(r) + \gtwoavg(-r)) \pi_R(r;0) dr < \int_\rbb G(0) \pi_R(r;0) dr = G(0)
\end{equation*}
Now, $\gtwoavg(0) = \gaussbig{0}{\frac{s^2}{4}}{g}$, which we can rewrite as
\begin{align*}
	\gaussbigbig{0}{\frac{s^2}{4}}{g} &= \frac{1}{\sqrt{\pi}} \int_0^\infty \left(g(sy/\sqrt{2}) + g(-sy/\sqrt{2})\right) e^{-y^2} dy
\end{align*}
Since $g$ satisfies Assumption \ref{a:g-qual} \eqref{a:g-strong}, the integrand is decreasing and so
\begin{align*}
	\gtwoavg(0) &< \frac{1}{\sqrt{\pi}} \int_0^\infty \left(g(sy) + g(-sy)\right) e^{-y^2} dy
	= \gavg(0) = \vf{1}{0}.
\end{align*}
\end{proof}

\subsection{Comparison to the high viscosity regime}
\label{sec:twomotor-highvisc}

The results of the preceding sections establish that there is a clear interference effect when multiple motors are involved in cargo transport.  However, the superadditivity of the stall force implies that when significant force is applied to the cargo, then multiple motors can cooperate and improve transport performance. It remains to discuss the performance of two motors in the presence of a high viscosity environment, in particular to address whether the forces encountered \emph{in vivo} are sufficient so that two motors perform better than one.  To this end, we show in Figure~\ref{fig:oneandtwodrag} the results of direct numerical simulations of the model (\ref{eq:motorcargosde}) for higher values of the cargo friction parameter $ \gamma $ which might correspond to \emph{in vivo} conditions.  We see that the stall force for the two-motor-cargo system remains about the same in our model ($ \gtrsim 20 pN $), about three times the stall force for a single motor.  At low applied forces, the two-motor-cargo speed is even slower than that predicted by our asymptotic (small $ \epsilon$) theory, and so is in particular slower than a single motor.  Thus, the qualitative conclusions of our analytical theories for small cargo friction seem to remain true for the larger frictions expected \emph{in vivo}.

We do not yet have a direct analytical approach to identify the threshold viscosity that constitutes a boundary between the diffusion dominated and drag dominated regimes.   But by comparing the direct numerical solutions to the 
 the prediction of the stochastic averaging theory in Figure \ref{fig:oneandtwodrag}, we see that the unloaded cargo velocity begins to depart from the asymptotic theory when $ \epsilon \sim 10^{-1} $ (top panel), whereas the overall structure of the force-velocity curve for a cargo attached to two motors shows significant deviations from the asymptotic theory only as $ \epsilon \sim 1 $ (lower panels).  In either case, our stochastic averaging theory remains superior to the force balance theory.

This is significant because when there is no external force applied and the drag force is $\gamma = 10^{-2}$ pN s$/$nm, we observe from Figures \ref{fig:oneandtwodrag} that the velocities on one motor and two motor systems is roughly 275 and 350 nm$/$s, respectively. Despite concerns that there may be interference between motors \emph{in vivo}, it is more likely that the drag forces are sufficient that two motor transport is indeed faster.

\begin{figure}[ht]
\begin{center}	
\includegraphics[scale = 0.4]{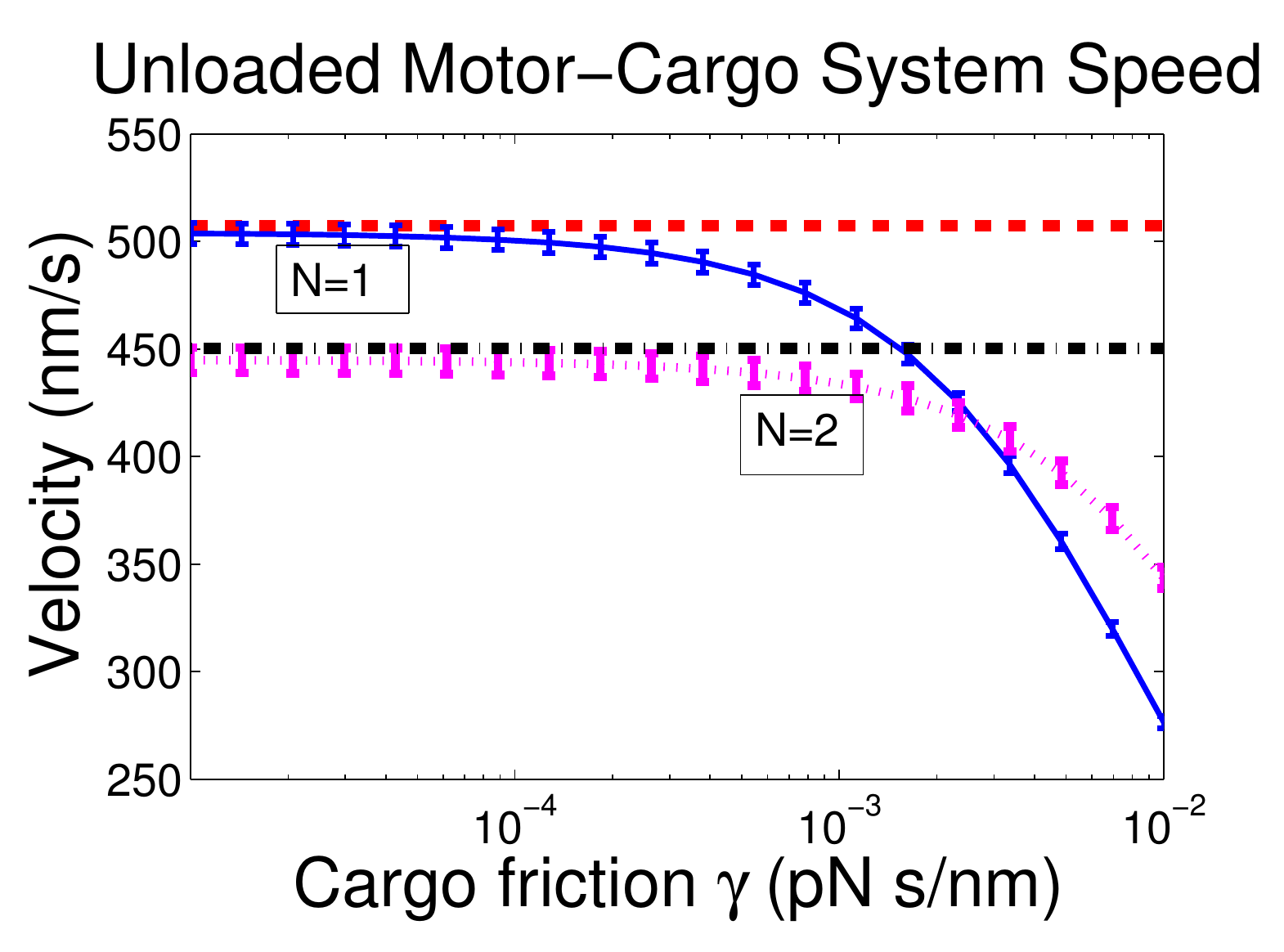} \\
\includegraphics[scale=.35]{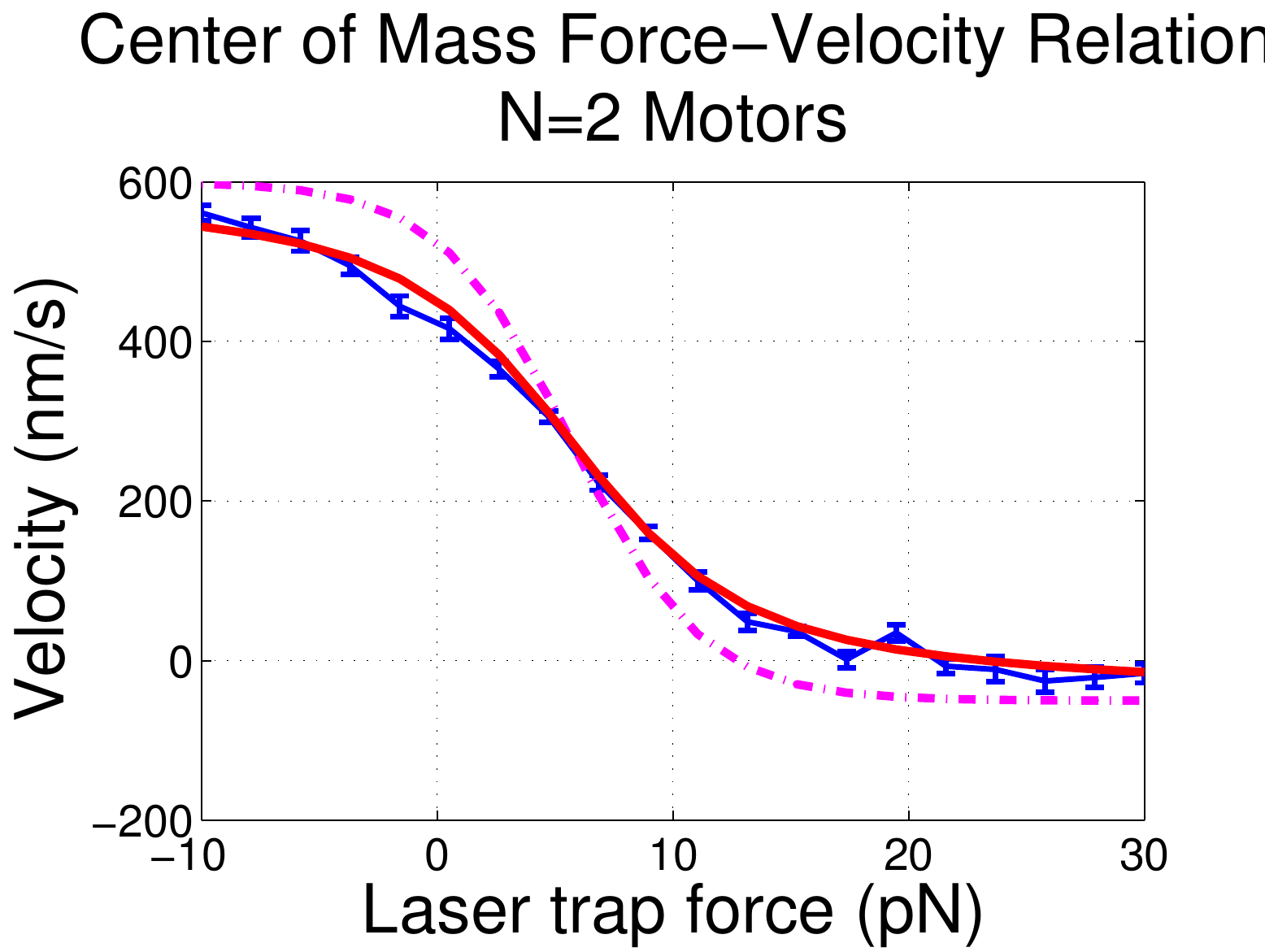} \hspace{0.2 in}
\includegraphics[scale=.35]{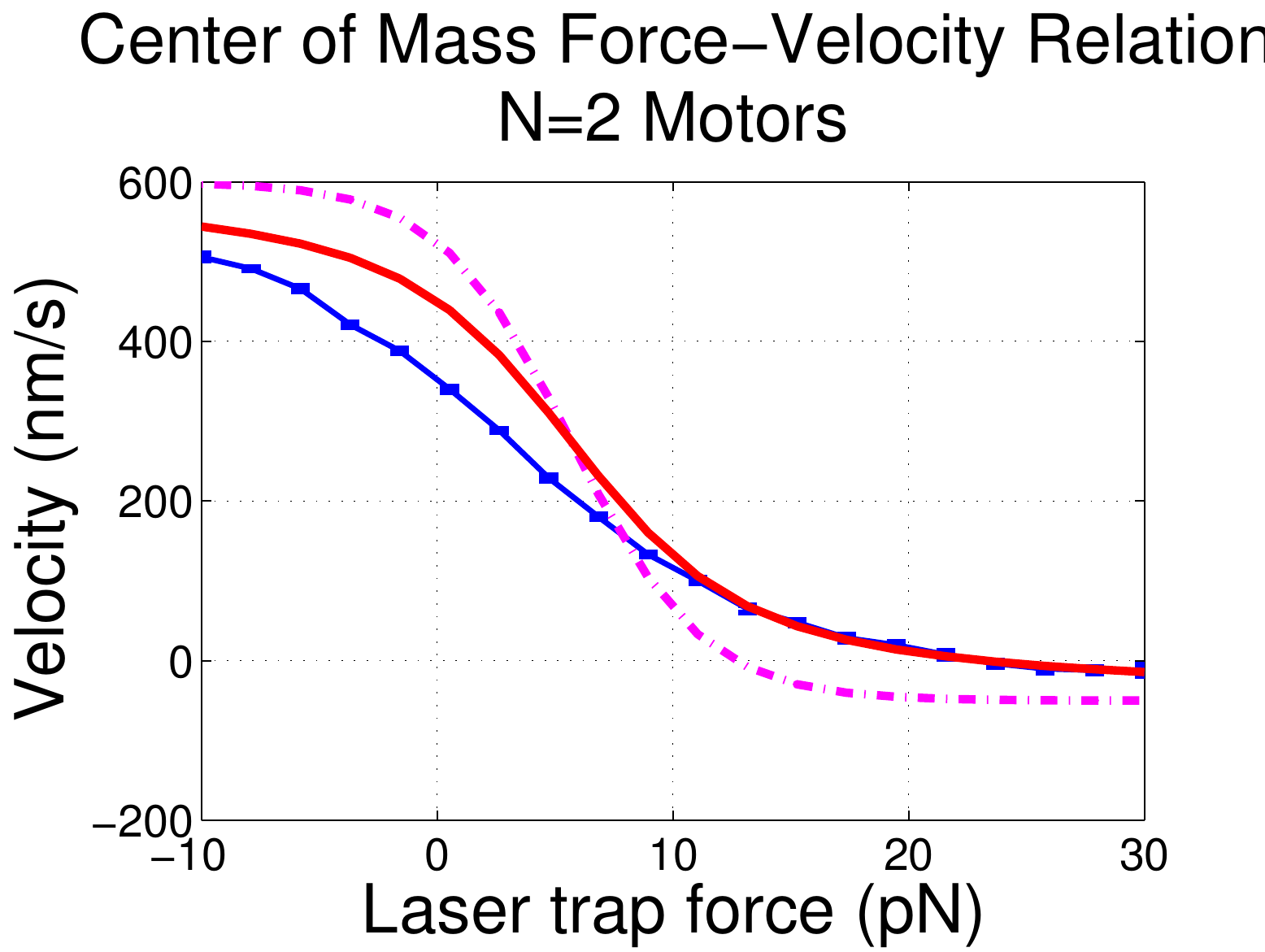}
\caption{\label{fig:oneandtwodrag} Top panel: Velocity of cargo attached to one or two motors as function of cargo friction parameter $ \gamma $.  Results of direct simulation of (\ref{eq:motorcargosde}) are indicated by the blue solid curve (one motor) and magenta dashed curve (two motors), with error bars representing one sample standard deviation.  The red dashed line (one motor) and black dot-dashed line (two motors) indicate the theoretical values corresponding to the $ \epsilon \downarrow 0 $ limit, when the time scale of fluctuation of the cargo is much smaller than the time scale of the motor motion.  The values of $ \gamma $ presented correspond to a bead of size $ a = 500 nm $~\citep{guydosh2006backsteps} and solvent viscosity ranging from the viscosity of water (at 300 K) to 1000 times that value
(so $ \epsilon $ ranges from $ 3 \times 10^{-3} $ to $ 3 $).  Lower panels: Numerical assessment of validity of stochastic averaging theory at higher solvent viscosities (left $ \gamma = 10^{-3} \text{pN} \cdot \text{s} / \text{nm} \qquad (\eta = 10^2 \eta_0) $; right $ \gamma = 10^{-2} \text{pN} \cdot \text{s} / \text{nm}\qquad  (\eta = 10^3 \eta_0) $
where $ \eta_0 = 10^{-9}  \text{pN} \cdot \text{s}/\text{nm}^2$ is the viscosity of pure water at room temperature).  The meaning of the curves is the same as in Figure~\ref{fig:cmforcevelocity}.
}
\end{center}
\end{figure}

\section{Conclusions}

We have developed a system of stochastic differential equations to describe the dynamics of multiple identical motors transporting intracellular cargo along a microtubule.  We use a Langevin construction for the cargo equation and a diffusion approximation for the motor equations.  

We perform a dimensional analysis of our system of equations, and we find important dimensionless groups of parameters; this reveals that the dynamics \emph{in vitro} are qualitatively distinct from what is likely to occur \emph{in vivo}.  In particular, the model suggests that in the experimental setting, the cargo is fluctuating rapidly relative to the slower moving motors.  

In addition to the stochasticity found in the cargo dynamics, we study the impact of fluctuating motor positions.  Using the randomness parameter calculated by \cite{Fisher:2001}, we note that the motors' positions relative to each other are constantly changing.  When multiple motors are engaged in transport of a single cargo, the force distribution among the motors is therefore state dependent. When the force-velocity relationship (Fig \ref{fig:gex}) is nonlinear, it follows that the transport properties of teams of identical motors are non-trivial.  We find that in the low viscosity environment, multiple motors are slower than one; however, for a fluid with viscosity similar to estimates of the cytoplasm \cite{Mitchell:2009}, drag force is sufficient that multiple motors perform better than a single motor. 

We also use our model to study the change in stall force as a function of motor copy number and viscosity of the fluid environment.  This stall force is calculated to increase superadditively for multiple motors and also increases with the viscosity of the fluid environment.  Neither of these results is in agreement with past work, and therefore a more careful study of motor dynamics near stall force is an important next step.  The inclusion of binding and unbinding events will also clearly lead to a substantial change in the calculated stall force. In addition, we expect that to account for richer dynamics, the diffusion term in the motor SDE may need to be amended to a multiplicative, rather than additive, form.

\section{Acknowledgments}
	The collaboration for this work was initiated during the Stochastic  Dynamics program at the Statistical and Applied Mathematical Sciences Institute (SAMSI), when the authors were partially supported as either long-term visitors or postdoctoral fellows.  The authors would further like to thank Will Hancock, Jonathan Mattingly, Michael Reed, John McSweeney and all the members of the SAMSI Stochastic Biological Dynamics Working Group for helpful conversations as we developed this work.  
Peter Kramer was partially supported by NSF CAREER grant DMS-0449717.  John Fricks was supported by the Joint DMS/NIGMS Initiative for Mathematical Biology DMS-0714939.

\appendix

\section{Extension to Nonlinear Spring Laws}
\label{sec:nonlinspring}

In the main text, we take a simple linear spring model (\ref{eq:lintail}) for the  force law for the tails connecting the motors to the cargo.  One could criticize this selection for a number of reasons.  The tail actually has a complex hinged structure 
and would likely go ``slack" under compression
(as in the model of~\citet{Korn:2009}).  Also, we recall that in Eq.~(\ref{eq:lintail}), $ \slen $  represents the separation between the motor and cargo projected along the microtubule, which is not the actual length of the tail in its real three-dimensional context.  Finally, since the cargo is much larger (typically $ \sim 500 $ nm) than the motor or tether, the geometry of the motor's tail and location of its attachment to the cargo may be significant~\citep{Korn:2009}.   The effective one-dimensional force law may depend on how far away from the microtubule the connecting tail is bound to the cargo, and since this is potentially different for different motors, each motor's tail may have a different force law!  
The purpose of this appendix is to show that our modeling framework and stochastic averaging procedure can robustly incorporate these more realistic complications.  

The reason we have not pursued them in the main text, beyond the desire for a simple presentation of the key ideas,  is that we could not identify a specific nonlinear force law that would be clearly better than the linear spring force law.
The accuracy of experimental observations is too coarse to motivate a more detailed model for the connection; in fact controversy persists over what qualitative form the simple spring model should take~\cite{Hariharan:2009}.  Some authors prefer a wormlike chain model~\cite{Howard:2001}
$ \Fspring (\slen) = \kappa \slen + \frac{\mathrm{sgn} (\slen)}{4} \left((1-|\slen|/\lcon)^{-2} -1\right) \text{ for } |\slen| < \lcon $
that stiffens as the effective spring length approaches a critical contour length $\lcon $, beyond which the tether becomes taut and will not stretch any further. The framework we propose allows a general nonlinear spring structure to cover both of these, as well as other models for the force response of the tether attaching the motor and cargo.

To incorporate a more general tail force law, which could differ for each motor, the model equations can be modified (\ref{eq:motorcargosde}) by simply replacing the tail force function $ \Fspring $ by a possibly motor-dependent tail force function $ \Fspring_i $, and replacing the linear force law (Eq.~\ref{eq:lintail}) by a more general form:
\begin{equation}
\Fspring (\slen) = \Fmag \Phip \left(\slen/L_c\right). \label{eq:nonlspring}
\end{equation}
Here $ \Phii (\xi) $ describes an arbitrary nondimensionalized spring potential, $L_c $ describes a length scale at which the spring force changes form, and $ \Fmag $ gives a characteristic magnitude of the tail force.  The linear spring force law (\ref{eq:lintail}) would correspond to a degenerate case in which $ \Phi (\xi) = \frac{1}{2} \xi^2 $, $ L_c $ is not well defined (and could be chosen arbitrarily), and $ \Fmag = \kappa L_c $.  For a tail force law that is linear at small displacements but nonlinear after some length scale $ L_c $ (such as the wormlike chain model mentioned above), $ \Phi $ would be a more general function than quadratic, and $ \Fmag $ could be reasonably chosen as $ \kappa L_c $, where $ \kappa $ gives the constant of proportionality of the force against displacement in the linear regime.  We will assume this type of model in the remaining discussion; other cases could be handled similarly with some modifications to the nondimensionalization procedure.

Proceeding with nonlinear force laws that can be written in the form (\ref{eq:nonlspring}) (with $ \Fmag = \kappa L_c$), and applying the same nondimensionalization as in Subsection~\ref{sec:nondim}, we obtain in place of Eq.~(\ref{eq:motorcargorescsde}):
\begin{align}
\difd \Xnd_i(\tt) &=  \epsilon g\left( s \nonld^{-1} \Phipi \left(\nonld (\Xnd_i(\tt) - \Znd (\tt))\right)\right) \, \difd \tt + \sqrt{\epsilon \rho}  \, \difd W_i(\tt)\\
	\difd \Znd (\tt) &= \sum_{i=1}^{\Nmot} \left[\nonld^{-1} \Phipi \left(\nonld (\Xnd_i(\tt) - \Znd (\tt))\right)- \frac{\Ftrnd}{\Nmot} \right] \, \difd \tt  +  \difd W_z(\tt),
\end{align}
with an additional nondimensional parameter defined as the ratio of the typical tail extension through thermal fluctuations to the length scale at which nonlinearity becomes important:
\begin{equation*}
\nonld \equiv \frac{\sqrt{2 \kB T/\kappa}}{L_c}
\end{equation*}
The dimensional analysis in Subsection~\ref{sec:nondim} implicitly assumes this quantity is not large ( $ \nonld \lesssim 1 $); otherwise using the length scale $ \sqrt{2 \kB T/\kappa} $ to characterize the typical tail length amounts to an inconsistent application of the linear spring force law beyond the regime of its validity.

The stochastic averaging proceeds as before, provided that the $ \Phi_i (\xi) $ satisfy some technical growth and smoothness conditions (namely,  $\Phi_i$ should be $C^1$ and $\int_0^x \exp \left\{2 \int_0^y \Phi_i'(v) dv \right\} dy \rightarrow \pm \infty$ as $|x| \rightarrow \infty$ and $\int_{-\infty}^\infty \exp \left\{-2 \int_0^x \Phi_i'(v) dv \right\} dx < \infty$), to again yield the averaged equations Eq.~(\ref{eq:motavg}), with the drift function for the motor coordinates and stationary distribution of the cargo modified as follows:
\begin{align*}
\gavg_i(\vec{x};\Ftrnd) &= \int_{\mathbb{R}} g\left[s \nonld^{-1} \Phipi \left(\nonld (x_i - z)\right)\right] m_{\vec{x},\Ftrnd}(z) \, \difd z; \label{eq:g-avg}\\ 
m_{\vec{x},\Ftrnd}(z) &= \normz^{-1} (\vec{x},\Ftrnd) \exp\left\{ - 
2\left[\Ftrnd z + \nonld^{-2} \sum_{i=1}^{\Nmot} \Phii \left(\nonld (x_i - z)\right) \right]\right\}, \\
\normz (\vec{x},\Ftrnd) &\equiv \int_{\mathbb{R}} 
\exp\left\{ - 
2\left[\Ftrnd z + \nonld^{-2} \sum_{i=1}^{\Nmot} \Phii \left(\nonld (x_i - z)\right) \right]\right\}.
\end{align*}

The analysis in Sections~\ref{sec:low-viscosity} and~\ref{sec:analysis-two-motors-fluct} for developing the effective drift and diffusivity for a cargo bound to  $ \Nmot = 1 $ 
or $ \Nmot = 2 $ motors proceeds in the same way, with just a few resulting changes in the formulas.  The effective drift (\ref{eq:velocity-one-motor}) of a single motor now involves a generally non-Gaussian average of the instantaneous force-velocity function:
\begin{align*}
\vf{1}{\Ftrnd} &= \normz^{-1} \int_{\mathbb{R}} g\left[s \nonld^{-1} \Phip \left(\nonld (x - z)\right)\right]  
 \exp\left\{-2\left[\Ftrnd z +
\nonld^{-2}  \Phi \left(\nonld (x-z)\right)  \right]\right\}\, \difd z, \\
\normz &= \int_{\mathbb{R}}  \exp\left\{-2\left[\Ftrnd z +
\nonld^{-2}  \Phi \left(\nonld (x-z)\right)  \right]\right\}\, \difd z.
\end{align*}
The formulas for the effective drift and diffusivity of a cargo bound to two motors are expressed in terms of the functions $ G_{\pm} (r;\Ftrnd) $, which would be redefined for nonlinear force laws as:
\begin{align*}
G_{\pm} (r;\Ftrnd) &\equiv G_1 (r;\Ftrnd) \pm G_2 (r;\Ftrnd), \\
\gavg_i (x_1,x_2;\Ftrnd) &= G_i (x_1-x_2;\Ftrnd).
\end{align*}

\begin{small}
\bibliographystyle{plain}
\bibliography{motors}

\begin{thebibliography}{10}

\bibitem{asenciofunctional}
M.~Asencio, G.~Hooker, and H.O. Gao.
\newblock Functional convolution models.

\bibitem{Astumian:2002}
R.~Dean Astumian and Peter H\"{a}nggi.
\newblock Brownian motors.
\newblock {\em Physics Today}, pages 33--39, November 2002.

\bibitem{Banting:2000}
G.~Banting and S.~J. Higgins, editors.
\newblock {\em Essays in Biochemistry: Molecular Motors}, volume~35.
\newblock Portland Press, London, 2000.

\bibitem{Bouzat:2010}
Sebastien Bouzat and Fernando Falo.
\newblock The influence of direct motor-motor interaction in models for cargo
  transport by a single team of motors.
\newblock {\em Physical Biology}, 7:046009, 2010.

\bibitem{Campas:2006}
O.~Camp\`as, Y.~Kafri, K.~B. Zeldovich, J.~Casademunt, and J.-F. Joanny.
\newblock Collective dynamics of interacting molecular motors.
\newblock {\em Phys. Rev. Lett.}, 97(3):038101, Jul 2006.

\bibitem{Carter:2005}
N~Carter and R~Cross.
\newblock Mechanics of the kinesin step.
\newblock {\em NATURE-LONDON-}, Jan 2005.

\bibitem{Coppin:1996}
CM~Coppin, JT~Finer, JA~Spudich, and RD~Vale.
\newblock Detection of sub-8-nm movements of kinesin by high-resolution
  optical-trap microscopy.
\newblock {\em Proceedings of the National Academy of Sciences of the United
  States of America}, 93(5):1913, 1996.

\bibitem{DeVille:2008}
R.E.~Lee DeVille and Eric Vanden-Eijnden.
\newblock Regular gaits and optimal velocities for motor proteins.
\newblock {\em Biophysical Journal}, 95:2681--2691, September 2008.

\bibitem{Driver:2010}
Jonathan~W. Driver, Arthur~R. Rogers, D.~Kenneth Jamison, Rahul~K. Das,
  Anatoly~B. Kolomeisky, and Michael~R. Diehl.
\newblock Coupling between motor proteins determines dynamic behaviors of motor
  protein assemblies.
\newblock {\em Phys. Chem. Chem. Phys.}, 12(35):10398--10405, 2010.

\bibitem{Elston:2000}
Timothy~C. Elston and Charles~S. Peskin.
\newblock The role of protein flexibility in molecular motor function: coupled
  diffusion in a tilted periodic potential.
\newblock {\em SIAM J. Appl. Math.}, 60(3):842--867 (electronic), 2000.

\bibitem{Fisher:2001}
ME~Fisher and AB~Kolomeisky.
\newblock Simple mechanochemistry describes the dynamics of kinesin molecules.
\newblock {\em Proceedings of the National Academy of Sciences}, 98(14):7748,
  2001.

\bibitem{Freidlin:1998}
M.~I. Freidlin and A.~D. Wentzell.
\newblock {\em Random perturbations of dynamical systems}, volume 260 of {\em
  Grundlehren der Mathematischen Wissenschaften [Fundamental Principles of
  Mathematical Sciences]}.
\newblock Springer-Verlag, New York, second edition, 1998.
\newblock Translated from the 1979 Russian original by Joseph Sz{\"u}cs.

\bibitem{Fricks:2006}
John Fricks, Hongyun Wang, and Timothy~C. Elston.
\newblock A numerical algorithm for investigating the role of the motor-cargo
  linkage in molecular motor-driven transport.
\newblock {\em Journal of Theoretical Biology}, 239(1):33 -- 48, 2006.

\bibitem{gagliano:2010}
J.~Gagliano, M.~Walb, B.~Blaker, J.C. Macosko, and G.~Holzwarth.
\newblock Kinesin velocity increases with the number of motors pulling against
  viscoelastic drag.
\newblock {\em European biophysics journal}, 39(5):801--813, 2010.

\bibitem{Goldman:2010}
Carla Goldman.
\newblock A hopping mechanism for cargo transport by molecular motors on
  crowded microtubules.
\newblock {\em Journal of Statistical Physics}, 140:1--15, 2010.
\newblock 10.1007/s10955-010-0037-2.

\bibitem{Gross:2007}
Steven~P. Gross, Michael Vershinin, and George~T. Shubeita.
\newblock Cargo transport: Two motors are sometimes better than one.
\newblock {\em Current Biology}, 17(12):R478 -- R486, 2007.

\bibitem{Guerin:2010}
Thomas Gu\'{e}rin, Jacques Prost, Pascal Martin, and Jean-Fran\c{c}ois Joanny.
\newblock Coordination and collective properties of molecular motors: theory.
\newblock {\em Current Opinion in Cell Biology}, 22(1):14 -- 20, 2010.
\newblock Cell structure and dynamics.

\bibitem{guydosh2006backsteps}
N.R. Guydosh and S.M. Block.
\newblock Backsteps induced by nucleotide analogs suggest the front head of
  kinesin is gated by strain, 2006.

\bibitem{hancock2003kinesin}
W.O. Hancock and J.~Howard.
\newblock {\em Molecular Motors}, chapter Kinesin: processivity and
  chemomechanical coupling, pages 243--269.
\newblock Wiley-VCH, Weinheim, Germany, 2003.

\bibitem{Hanggi:2009}
P.~H\"{a}nggi and F.~Marchesoni.
\newblock Artificial brownian motors: Controlling transport on the nanoscale.
\newblock {\em Rev. Mod. Phys.}, 81(1):387--442, January-March 2009.

\bibitem{Hariharan:2009}
V.~Hariharan and W.O. Hancock.
\newblock {Insights into the Mechanical Properties of the Kinesin Neck Linker
  Domain from Sequence Analysis and Molecular Dynamics Simulations}.
\newblock {\em Cellular and Molecular Bioengineering}, 2(2):177--189, 2009.

\bibitem{Hendricks:2010}
AG~Hendricks, E~Perlson, JL~Ross, HW~Schroeder III, M~Tokito, and ELF Holzbaur.
\newblock Motor coordination via a tug-of-war mechanism drives bidirectional
  vesicle transport.
\newblock {\em Current Biology}, 2010.

\bibitem{hirokawa1998kinesin}
N.~Hirokawa.
\newblock Kinesin and dynein superfamily proteins and the mechanism of
  organelle transport.
\newblock {\em Science}, 279(5350):519, 1998.

\bibitem{howard:1989}
J.~Howard, A.~J. Hudspeth, and R.~D. Vale.
\newblock Movement of microtubules by single kinesin molecules.
\newblock {\em Nature}, 342(6246):154--8, 1989.

\bibitem{Howard:2001}
Jonathon Howard.
\newblock {\em Mechanics of Motor Proteins and the Cytoskeleton}.
\newblock Sinauer Associates, Sunderland, MA, 2001.

\bibitem{Frickshancock3}
J.~Hughes, W.O. Hancock, and J.~Fricks.
\newblock Kinesins with extended neck linkers: A chemomechanical model for
  variable-length stepping.
\newblock {\em Bulletin of Mathematical Biology}, 2011.

\bibitem{Frickshancock2}
J.~Hughes, W.O. Hancock, and J.~Fricks.
\newblock A matrix computational approach to kinesin neck linker extension.
\newblock {\em Journal of Theoretical Biology}, 269(1), 2011.

\bibitem{jamison:2010}
D.~Kenneth Jamison, Jonathan~W. Drive, Arthur~R. Rogers, Pamela~E.
  Constantinou, and Michael~R. Diehl.
\newblock Two kinesins transport cargo primarily via the action of one motor:
  implications for intracellular transport.
\newblock {\em Biophysical Journal}, 99:2967--2977, November 2010.

\bibitem{Khasminskii:1968}
R.Z. Khasminskii.
\newblock {On the principle of averaging in Ito's stochastic differential
  equations}.
\newblock {\em Kybernetika (Prague)}, 4:260--279, 1968.

\bibitem{Kinderlehrer:2002}
David Kinderlehrer and Micha{\l} Kowalczyk.
\newblock Diffusion-mediated transport and the flashing ratchet.
\newblock {\em Arch. Ration. Mech. Anal.}, 161(2):149--179, 2002.

\bibitem{Klumpp:2005}
S~Klumpp and R~Lipowsky.
\newblock Cooperative cargo transport by several molecular motors.
\newblock {\em Proceedings of the National Academy of Sciences of the United
  States of America}, 102(48):17284, 2005.

\bibitem{Kojima:1997}
Hiroaki Kojima, Etsuko Muto, Hideo Higuchi, and Toshio Yanagida.
\newblock Mechanics of single kinesin molecules measured by optical trapping
  nanometry.
\newblock {\em Biophysical Journal}, 73:2012--2022, oct 1997.

\bibitem{Kolomeisky:2007}
Anatoly~B. Kolomeisky and Michael~E. Fisher.
\newblock Molecular motors: A theorist's perspective.
\newblock {\em Annual Review of Physical Chemistry}, 58(1):675--695, 2007.

\bibitem{Korn:2009}
CB~Korn, S~Klumpp, R~Lipowsky, and US~Schwarz.
\newblock Stochastic simulations of cargo transport by processive molecular
  motors.
\newblock {\em The Journal of Chemical Physics}, 131:245107, 2009.

\bibitem{Kramer:2010}
Peter~R. Kramer, Juan~C. Latorre, and Adnan~A. Khan.
\newblock Two coarse-graining studies of stochastic models in molecular
  biology.
\newblock {\em Communications in Mathematical Sciences}, 8(2):481--517, 2010.
\newblock to appear in \emph{Communications in Mathematical Sciences}.

\bibitem{Kunwar:2010}
A~Kunwar and A~Mogilner.
\newblock Robust transport by multiple motors with nonlinear force--velocity
  relations and stochastic load sharing.
\newblock {\em Physical Biology}, 7:016012, 2010.

\bibitem{kunwar2008stepping}
A.~Kunwar, M.~Vershinin, J.~Xu, and S.P. Gross.
\newblock Stepping, strain gating, and an unexpected force-velocity curve for
  multiple-motor-based transport.
\newblock {\em Current Biology}, 18(16):1173--1183, 2008.

\bibitem{kutoyants2004statistical}
Y.A. Kutoyants.
\newblock {\em Statistical inference for ergodic diffusion processes}.
\newblock Springer Verlag, 2004.

\bibitem{Larson:2009}
Adam~G. Larson, Eric~C. Landahl, and Sarah~E. Rice.
\newblock Mechanism of cooperative behaviour in systems of slow and fast
  molecular motors.
\newblock {\em Phys. Chem. Chem. Phys.}, 11:4890--4898, 2009.

\bibitem{Lin:1988}
C.~C. Lin and L.~A. Segel.
\newblock {\em Mathematics applied to deterministic problems in the natural
  sciences}.
\newblock Society for Industrial and Applied Mathematics (SIAM), Philadelphia,
  PA, second edition, 1988.
\newblock With material on elasticity by G. H. Handelman, With a foreword by
  Robert E. O'Malley, Jr.

\bibitem{Lindner:2001}
Benjamin Lindner, Marcin Kostur, and Lutz Schimansky-Geier.
\newblock Optimal diffusive transport in a tilted periodic potential.
\newblock {\em Fluctuation and Noise Letters}, 1(1):R25--R39, 2001.

\bibitem{Maes:2003}
Christian Maes and Maarten~H. van Wieren.
\newblock A {M}arkov model for kinesin.
\newblock {\em J. Statist. Phys.}, 112(1-2):329--355, 2003.

\bibitem{Mitchell:2009}
CS~Mitchell and RH~Lee.
\newblock {A quantitative examination of the role of cargo-exerted forces in
  axonal transport}.
\newblock {\em Journal of theoretical biology}, 257:430--437, 2009.

\bibitem{Mogilner:2002}
Alex Mogilner, Timothy~C. Elston, Hongyun Wang, and George Oster.
\newblock Molecular motors: Theory.
\newblock In Christopher~P. Fall, Eric~S. Marland, John~M. Wagner, and John~J.
  Tyson, editors, {\em Computational cell biology}, volume~20 of {\em
  Interdisciplinary Applied Mathematics}, chapter~12, pages xx+468.
  Springer-Verlag, New York, 2002.

\bibitem{Muller:2008pnas}
M~M{\"u}ller and S~Klumpp.
\newblock Tug-of-war as a cooperative mechanism for bidirectional cargo
  transport by molecular motors.
\newblock {\em Proceedings of the National Academy of Sciences},
  105(12):4609--4614, Jan 2008.

\bibitem{Muller:2008jstatphys}
M~M{\"u}ller, S~Klumpp, and R~Lipowsky.
\newblock Motility states of molecular motors engaged in a stochastic
  tug-of-war.
\newblock {\em Journal of Statistical Physics}, Jan 2008.

\bibitem{Muller:2010}
Melanie~{J.I.} M{\"u}ller, Stefan Klumpp, and Reinhard Lipowsky.
\newblock Bidirectional transport by molecular motors: Enhanced processivity
  and response to external forces.
\newblock {\em Biophysical Journal}, 98(11):2610--2618, 2010.

\bibitem{Hancock:2010}
G~Muthukrishnan, Y~Zhang, S~Shastry, and WO~Hancock.
\newblock The processivity of kinesin-2 motors suggests diminished front-head
  gating.
\newblock {\em Current Biology}, 19(5):442--447, 2009.

\bibitem{Muthukrishnan:2009p353}
Gayatri Muthukrishnan, Yangrong Zhang, Shankar Shastry, and William Hancock.
\newblock The processivity of kinesin-2 motors suggests diminished front-head
  gating.
\newblock {\em Current Biology}, 19(5):442--447, Mar 2009.

\bibitem{Novozhilov:1997}
Igor~V. Novozhilov.
\newblock {\em Fractional analysis}.
\newblock Birkh\"auser Boston Inc., Boston, MA, 1997.
\newblock Methods of motion decomposition, Translation of the 1991 Russian
  original.

\bibitem{Pavliotis:2005}
G.~A. Pavliotis.
\newblock A multiscale approach to {B}rownian motors.
\newblock {\em Phys. Lett. A}, 344:331--345, 2005.

\bibitem{Perkins:2009}
Thomas~T. Perkins.
\newblock Optical traps for a single molecule biophysics: a primer.
\newblock {\em Laser \& Photonics Reviews}, 3(1-2):203--220, 2009.

\bibitem{Peskin:1995}
C~Peskin and G~Oster.
\newblock Coordinated hydrolysis explains the mechanical behavior of kinesin.
\newblock {\em Biophysical journal}, Jan 1995.

\bibitem{Peskin:1994}
C.~S. Peskin, G.~B. Ermentrout, and G.~F. Oster.
\newblock The correlation ratchet: A novel mechanism for generating directed
  motion by atp hydrolysis.
\newblock In Van~C. Mow, Farshid Guilak, Roger Tran-Son-Tay, and Robert~M.
  Hochmuth, editors, {\em Cell Mechanics and Cellular Engineering}, pages
  479--489. Springer-Verlag, New York, 1994.

\bibitem{Posta:2009}
Filippo Posta, Maria~R. D'Orsogna, and Tom Chou.
\newblock Enhancement of cargo processivity by cooperating molecular motors.
\newblock {\em Phys. Chem. Chem. Phys.}, 11:4851--4860, 2009.

\bibitem{Qian:2000}
Hong Qian.
\newblock The mathematical theory of molecular motor movement and
  chemomechanical energy transduction.
\newblock {\em J. Math. Chem.}, 27(3):219--234, 2000.

\bibitem{Reimann:2002}
Peter Reimann.
\newblock Brownian motors: noisy transport far from equilibrium.
\newblock {\em Physics Reports}, 361:57, 2002.

\bibitem{Rogers:2009}
AR~Rogers, JW~Driver, PE~Constantinou, DK~Jamison, and MR~Diehl.
\newblock Negative interference dominates collective transport of kinesin
  motors in the absence of load.
\newblock {\em Physical Chemistry Chemical Physics}, 11(24):4882--4889, 2009.

\bibitem{Schnitzer:2000}
M~Schnitzer, K~Visscher, and S~Block.
\newblock Force production by single kinesin motors.
\newblock {\em Nature cell biology}, Jan 2000.

\bibitem{shastry2010neck}
S.~Shastry and W.~Hancock.
\newblock {Neck-Linker Length is a Critical Determinant of Kinesin
  Processivity}.
\newblock {\em Biophysical Journal}, 98:369, 2010.

\bibitem{Shastry:2010}
Shankar Shastry and William~O Hancock.
\newblock {Neck linker length determines the degree of processivity in
  Kinesin-1 and Kinesin-2 motors}.
\newblock {\em Current Biology}, 2010.

\bibitem{Shtridelman:2008}
Y~Shtridelman, T~Cahyuti, B~Townsend, D~DeWitt, and JC~Macosko.
\newblock Force--velocity curves of motor proteins cooperating in vivo.
\newblock {\em Cell Biochem Biophys}, 52(1):19--29, 2008.

\bibitem{Shtridelman:2009}
Yuri Shtridelman, George~M. Holzwarth, Clayton~T. Bauer, Natalie~R. Gassman,
  David~A. DeWitt, and Jed~C. Macosko.
\newblock In vivo multimotor force--velocity curves by tracking and sizing
  sub-diffraction limited vesicles.
\newblock {\em Cellular and Molecular Bioengineering}, 2:190--199, jun 2009.

\bibitem{Shubeita:2008}
G~Shubeita, S~Tran, J~Xu, M~Vershinin, S~Cermelli, S~Cotton, M~Welte, and
  S~Gross.
\newblock {Consequences of motor copy number on the intracellular transport of
  kinesin-1-driven lipid droplets}.
\newblock {\em Cell}, 135:1098--1107, 2008.

\bibitem{Skorokhod:2002}
Anatoli~V. Skorokhod, Frank~C. Hoppensteadt, and Habib Salehi.
\newblock {\em Random perturbation methods with applications in science and
  engineering}, volume 150 of {\em Applied Mathematical Sciences}.
\newblock Springer-Verlag, New York, 2002.

\bibitem{Vershinin:2007}
M~Vershinin, BC~Carter, DS~Razafsky, SJ~King, and SP~Gross.
\newblock {Multiple-motor based transport and its regulation by Tau}.
\newblock {\em Proceedings of the National Academy of Sciences}, 104(1):87,
  2007.

\bibitem{Visscher:1999}
K~Visscher, M~Schnitzer, and S~Block.
\newblock Single kinesin molecules studied with a molecular force clamp.
\newblock {\em Nature}, Jan 1999.

\bibitem{Wang:2007b}
Hongyun Wang.
\newblock A new derivation of the randomness parameter.
\newblock {\em J. Math. Phys.}, 48(10):103301, 18, 2007.

\bibitem{Wang:2008}
Hongyun Wang.
\newblock Several issues in modeling molecular motors.
\newblock {\em Journal of Computational and Theoretical Nanoscience},
  5:2311--2345, 2008.

\bibitem{Wang:2007}
Hongyun Wang and Timothy~C. Elston.
\newblock Mathematical and computational methods for studying energy
  transduction in protein motors.
\newblock {\em J. Stat. Phys.}, 128(1-2):35--76, 2007.

\bibitem{Wang:2003}
Hongyun Wang, Charles~S. Peskin, and Timothy~C. Elston.
\newblock A robust numerical algorithm for studying biomolecular transport
  processes.
\newblock {\em J. Theoret. Biol.}, 221(4):491--511, 2003.

\bibitem{Wang:2009}
Ziqing Wang and Ming Li.
\newblock Force-velocity relations for multiple-molecular-motor transport.
\newblock {\em Phys. Rev. E}, 80(4):041923, Oct 2009.

\bibitem{yildiz2008intramolecular}
A.~Yildiz, M.~Tomishige, A.~Gennerich, and R.D. Vale.
\newblock {Intramolecular strain coordinates kinesin stepping behavior along
  microtubules}.
\newblock {\em Cell}, 134(6):1030--1041, 2008.

\bibitem{Zhang:2010}
Yunxin Zhang and Michael~E. Fisher.
\newblock Dynamics of the tug-of-war model for cellular transport.
\newblock {\em Phys. Rev. E}, 82(1):011923, Jul 2010.

\end{thebibliography}
\end{small}

\end{document}